\numberwithin{equation}{section}
\newcommand{\R}{\mathbb{R}}
\newtheorem{theorem}{Theorem}[section]
\newtheorem{lemma}[theorem]{Lemma}
\newtheorem{proposition}[theorem]{Proposition}
\theoremstyle{definition}
\newtheorem{remark}[theorem]{Remark}
\newcommand{\Extend}[5]{\ext@arrow0099{\arrowfill@#1#2#3}{#4}{#5}}
\begin{document}
\title[Dispersive and Strichartz estimates]{Dispersive and Strichartz estimates for 3D wave equation with a class of many-electric potentials}

\author{Haoran Wang}
\address{Department of Mathematics, Beijing Institute of Technology, Beijing 100081, China;}
\email{wanghaoran@bit.edu.cn}

\begin{abstract}
We prove the dispersive and Strichartz estimates for solutions to the wave equation with a class of many-electric potentials in spatial dimension three. To obtain the desired dispersive estimate, based on the spectral properties of the Schr\"odinger operator involved, we subsequently prove the dispersive estimate for the corresponding Schr\"odinger semigroup, obtain a Gaussian type upper bound, establish Bernstein type inequalities, and finally pass to the M\"uller-Seeger's subordination formula. The desired Strichartz estimates follow by the established dispersive estimate and the standard argument of Keel-Tao.
\end{abstract}

\maketitle

\begin{center}
 \begin{minipage}{100mm}
   { \small {{\bf Key Words:}  Dispersive estimate;  Strichartz estimate; Wave equation; Hermite potential; Inverse square potential.
   }
      {}
   }\\
    { \small {\bf 2020 Mathematics Subject Classification:}
      {42B37,  35L05,  35Q40.}
      }
 \end{minipage}
 \end{center}
\bigskip\bigskip
\maketitle

\section{Introduction}

In this paper, we consider the Cauchy problem for the 3D perturbed wave equation
\begin{equation}\label{eq:wave}
\begin{cases}
\partial_{tt}u-\Delta u+Vu=0,\quad (t,x)\in\R\times\R^3\\
u(0,x)=f(x),\quad \partial_t u(0, x)=g(x),
\end{cases}
\end{equation}
where $V(x):\R^3\rightarrow\R$ is the Laguerre potential, a combination of the inverse square potential and the Hermite potential, i.e.
\begin{equation}\label{potential}
V(x)=\frac{a}{|x|^2}+\frac{b}{4}|x|^2,\quad a,b\geq0,\quad x\in\R^3\setminus\{0\}.
\end{equation}
If we write $H_{a,b}$ for the operator in \eqref{eq:wave}, i.e.
\begin{equation}\label{H-ab}
H_{a,b}:=-\Delta+\frac{a}{|x|^2}+\frac{b}{4}|x|^2,
\end{equation}
then it is easy to verify that the quadratic form of the operator $H_{a,b}$ is positive definite, i.e.
\begin{equation*}
\int_{\R^3}\left(|\nabla\varphi|^2+\frac{a}{|x|^2}|\varphi|^2+\frac{b|x|^2}{4}|\varphi|^2\right)\mathrm{d}x>0,\quad \forall a,b\geq0,\quad \varphi\in\mathcal{S}(\R^3).
\end{equation*}
Hence, the operator $H_{a,b}$ is a symmetric semi-bounded operator on $L^2(\R^3;\mathbb{C})$, which implies that $H_{a,b}$ admits a self-adjoint extension, particularly, the Friedrichs' extension, with a formal domain $\mathcal{D}$ given by
\begin{equation*}
\mathcal{D}(H_{a,b})=\left\{\varphi\in L^2(\R^3;\mathbb{C}):H_{a,b}\varphi\in L^2(\R^3;\mathbb{C})\right\}.
\end{equation*}
As a consequence of the Friedrichs' extension, the unitary propagators like $e^{itH_{a,b}},e^{it\sqrt{H_{a,b}}}$ are well-defined on the domain of $H_{a,b}$ via the spectral theorem.

\noindent The unique solution $u\in C(\R;L^2(\R^3))$ of the problem \eqref{eq:wave} can be written as
\begin{equation}\label{wave:solution}
u(t,x)=\cos(t\sqrt{H_{a,b}})f(x)+\frac{\sin(t\sqrt{H_{a,b}})}{\sqrt{H_{a,b}}}g(x).
\end{equation}
If the inverse square potential is removed (i.e. $a\equiv0$ in \eqref{H-ab}), then the Hamiltonian $H_{a,b}$ is reduced to the usual harmonic oscillator $H_{0,b}=-\Delta+\frac{b}{4}|x|^2$. If the Hermite potential is removed (i.e. $b=0$ in \eqref{H-ab}), then one gets the Schr\"odinger operator with an inverse square potential $H_{a,0}=-\Delta+\frac{a}{|x|^2}$, which is known to have the same homogeneity as the free Laplacian $-\Delta$. If the potential $V$ in \eqref{eq:wave} vanishes, then we recover the classical free wave equation, which is known to be scaling-invariant.
The scaling-invariance of dispersive equations (including the free Schr\"odinger and wave equations) is critical for a large class of dispersive estimates, such as time-decay, Strichartz and local-smoothing.

\noindent In particular, one has the classical dispersive estimate
\begin{equation}\label{dis:free}
\left\|\frac{\sin(t\sqrt{-\Delta})}{\sqrt{-\Delta}}g\right\|_{L^\infty(\R^3)}\leq\frac{C}{|t|}\|g\|_{\dot{B}^1_{1,1}(\R^3)},\quad \forall |t|>0,
\end{equation}
where $\|\cdot\|_{\dot{B}^1_{1,1}}$ denotes the standard homogeneous Besov norm and $C>0$ is a suitable constant independent of $t$ and $g$.

In the past decades, it turns out that such estimates as \eqref{dis:free} play a fundamental role in various mathematical fields, including scattering theory, harmonic analysis and nonlinear dynamics (see e.g. \cite{BCT04,Bec14,KS07,Szp01}). By the standard $TT^\ast$ argument together with the unitary property of the half-wave propagator $e^{it\sqrt{-\Delta}}$
\begin{equation*}
\|e^{it\sqrt{-\Delta}}f\|_{L^2(\R^3)}=\|f\|_{L^2(\R^3)},
\end{equation*}
one can derive the Strichartz estimates
\begin{equation}\label{str:free}
\left\|\cos(t\sqrt{-\Delta})f+\frac{\sin(t\sqrt{-\Delta})}{\sqrt{-\Delta}}g\right\|_{L_t^q(\R;L^r(\R^3))}\leq C\left(\|f\|_{\dot{H}^s(\R^3)}+\|g\|_{\dot{H}^{s-1}(\R^3)}\right)
\end{equation}
for some constant $C>0$, where the pair $(q,r)$ satisfies
\begin{equation}\label{adm:scaling}
\frac{1}{q}+\frac{1}{r}\leq\frac{1}{2},\quad2\leq q,r<\infty
\end{equation}
and $s$ is the gap index
\begin{equation}\label{cd:gap}
s=3\left(\frac{1}{2}-\frac{1}{r}\right)-\frac{1}{q}.
\end{equation}
We assume $0\leq s<\frac{3}{2}$ to guarantee a room for the pair $(q,r)$.
The Strichartz estimate \eqref{str:free} was initially obtained by Segal \cite{Seg76} and then generalized by Strichartz \cite{Str77} in connection with Tomas's restriction theorem \cite{Tom75}. Later, Ginibre and Velo \cite{GV95} introduced a new viewpoint, which was extensively applied by Yajima \cite{Yaj87} to obtain a large class of inequalities for the Schr\"odinger equation. Finally, Keel and Tao \cite{KT98} settled the most challenging endpoint case $q=2$, via bilinear techniques, for an abstract propagator fulfilling the dispersive estimate as \eqref{dis:free}.

We stress that the above mentioned scaling-invariance is no longer valid for our model \eqref{eq:wave} due to the appearance of the Hermite potential in \eqref{H-ab}. Therefore, it is not trivial to verify the dispersive estimate \eqref{dis:free} (and hence the Strichartz estimates \eqref{str:free}) for the wave equation \eqref{eq:wave}, which is the main purpose of this work.
Before stating the main theorems, let us briefly review some relevant papers to better frame our results.
The dispersive estimate \eqref{dis:free} has been obtained previously for the Schr\"odinger and wave equations in e.g. \cite{BS93,Beal94,Bec11,BG12,BDH16,DP05,GV03,GS04,GVV06,Gol06,Gold06,Gold12,JSS91,PST03,RS04} when the potential $V$ is subcritical (i.e. decaying faster than the inverse square potential $|x|^{-2}$ near infinity). In the earlier work \cite{BS93}, Beals and Strauss obtained the $L^p-L^q$ decay estimates for the solution to the wave equation \eqref{eq:wave} but not the dispersive estimate in the spirit of \eqref{dis:free} for potentials both smooth and small in a suitable sense. Later, Georgiev and Visciglia \cite{GV03} proved the dispersive estimate \eqref{dis:free} for almost critical potentials, more precisely, for potentials $V\in C^\delta(\R^3\setminus\{0\}),\delta\in(0,1)$ satisfying
\begin{equation*}
0\leq V(x)\leq\frac{C}{|x|^{2+\epsilon}+|x|^{2-\epsilon}}\quad \text{for some}\quad \epsilon>0.
\end{equation*}
For the critical inverse square potential $\frac{a}{|x|^2}$, it turns out that the dispersive estimate \eqref{dis:free} is true when $a\geq0$ but may be false when $a<0$ (see \cite{PST03,PSTZ03}). The Strichartz estimate \eqref{str:free} was proved by Burq, Planchon, Stalker, and Tahvildar-Zadeh \cite{BPST03,BPST04} for both Schr\"odinger and wave equations with the potential $\frac{a}{|x|^2}$ in two and higher dimensions; actually, they obtained suitable Morawetz-type estimates for the perturbed resolvent by the related multiplier results, which, together with the well-known free counterparts, yields the desired Strichartz estimates for the perturbed operator $-\Delta+V$. Mizutani \cite{Miz17} studied the Strichartz estimates for the Schr\"odinger equation with the critical inverse-square potential $-\frac{(d-2)^2}{4|x|^2}$ in dimension $d\geq3$. It turns out that the inverse-square potential represents a threshold for the validity of the dispersive and Strichartz estimates, as shown in \cite{GVV06}.
For the Schr\"odinger flow $e^{it(\Delta-V)}$, a typical perturbation argument consists of expressing the action of the flow $e^{it(\Delta-V)}$ by the spectral theorem and then reducing the matters of proving the desired dispersive estimate to perform a suitable analysis on the resolvent of the perturbed Laplacian $-\Delta+V$, in the sense of Agmon-H\"ormander. As a standard example of such a method, we refer the interested reader to the paper of Goldberg and Schlag \cite{GS04} (see also Rodnianski and Schlag \cite{RS04}), in which the dispersive estimate was obtained for the Schr\"odinger equation with integrability or decaying conditions imposed on the potential $V$. Another effective method is studying the mapping properties of the wave operators of the perturbed Laplacian on $L^p$ and then the dispersive estimate for the Schr\"odinger flow $e^{it(\Delta-V)}$ follows from the dispersive estimate for the free Laplacian \eqref{dis:free} and the intertwining property. The wave operator argument was introduced and applied by Yajima in his series of papers \cite{Yaj93,Yaj95,Yaj99,Yaj06} to obtain a large class of inequalities for dispersive equations. It turns out that this approach leads to a much stronger result since many inequalities including the resolvent estimate follow as a consequence of the $L^p$-boundedness of the associated wave operators (see e.g. \cite{Bec14,DF06,MSZ23,Wed99}).
For growing potentials, we only mention the interesting paper \cite{DPR10}, in which the dispersive estimates for the wave equation associated to the Hermite operator and the twisted Laplacian were obtained.

In light of the above considerations, the main purpose of this paper is to prove the dispersive and Strichartz estimates as \eqref{dis:free} and \eqref{str:free} for the wave equation \eqref{eq:wave}. Different from the convention that the operator $H_{a,b}$ in \eqref{H-ab} is often regarded as the Laplacian perturbed by the potential \eqref{potential}, we may regard the operator $H_{a,b}$ as the harmonic oscillator perturbed by an inverse square potential since the spectral properties of $H_{a,b}$ possess more similarity to that of the harmonic oscillator $-\Delta+|x|^2$. Based on this observation, we will, without loss of generality, always take $b=1$ in \eqref{H-ab} or let $H_{a,1}$ take the place of $H_{a,b}$ throughout the paper.

The main effort will be devoted to prove the dispersive estimate \eqref{dis:free} since the Strichartz estimates can be derived from the dispersive estimate via the standard argument of Keel-Tao \cite{KT98}. To obtain the desired dispersive estimate, the steps we shall follow in the sequel consists of proving a dispersive estimate for the Schr\"odinger flow $e^{itH_{a,1}}$, obtaining a Gaussian type upper bound for the heat kernel $e^{-tH_{a,1}}(x,y)$, establishing the Bernstein's inequalities associated to Schr\"odinger operators and finally passing to the classical subordination formula. In order to carry out these steps, let us make some preparations. To obtain the dispersive estimate for the Schr\"odinger propagator $e^{itH_{a,1}}$, a natural way is to write down the Schr\"odinger kernel explicitly, for which a crucial role is played by the spectrum of the Laplace-Beltrami operator $-\Delta_{\mathbb{S}^2}$ on the unit sphere $\mathbb{S}^2$ in $\R^3$. It is well-known that $-\Delta_{\mathbb{S}^2}$ admits a purely discrete spectrum, consisting of real eigenvalues $\mu_k=k(k+1),k\in\mathbb{N}:=\{0,1,2,\cdots\}$ with finite multiplicity $2k+1$, and the set of the associated ($L^2$-normalised) eigenfunctions forms a complete orthonormal basis for $L^2(\mathbb{S}^2;\mathbb{C})$. In view of the symmetry of the wave equation, we shall always require $t>0$ for simplicity.

The Sobolev space associated to the Schr\"odinger operator $H_{a,1}$ is defined by
\begin{equation*}
\dot{\mathcal{H}}^s_{a,1}(\R^3):=H_{a,1}^{-s/2}L^2(\R^3)
\end{equation*}
and then the usual Sobolev space $\dot{H}^s(\R^3)$ (corresponding to the free Laplacian $-\Delta$) can be given by
\begin{equation*}
\dot{H}^s(\R^3):=\dot{\mathcal{H}}^s_{0,0}(\R^3).
\end{equation*}
The homogeneous Besov norm $\|\cdot\|_{\dot{\mathcal{B}}^s_{p,r}(\R^3)}$ is defined by (see also Sect. 5.2)
\begin{equation}\label{Besov}
\|f\|_{\dot{\mathcal{B}}^s_{p,r}(\R^3)}=\left(\sum_{j\in\mathbb{Z}}2^{sjr}\|\psi_j(\sqrt{H_{a,1}})f\|_{L^p(\R^3)}^r\right)^{1/r},
\end{equation}
where $s\in\R, p,r\in[1,\infty]$ and $\psi_j$ is a partition of unity
\begin{equation*}
\sum_{j\in\mathbb{Z}}\psi_j(\lambda)=1,\quad \forall\lambda>0.
\end{equation*}
In particular, one has
\begin{equation*}
\|f\|_{\dot{\mathcal{H}}^s_{a,1}(\R^3)}:=\|H_{a,1}^{s/2}f\|_{L^2(\R^3)}=\left\|\left(\sum_{j\in\mathbb{Z}}2^{2sj}|\psi_j(\sqrt{H_{a,1}})f|^2\right)^{1/2}\right\|_{L^2(\R^3)}
=\|f\|_{\dot{\mathcal{B}}^s_{2,2}(\R^3)}.
\end{equation*}
Now we state the first result concerning the dispersive estimate for the wave equation \eqref{eq:wave}.
\begin{theorem}[Dispersive estimate]\label{thm:wave}
Let $T$ be a constant in $(0,\pi)$, then, for any $t\in(0,T)$, there exists a positive constant $C$ independent of $t$ such that, for all $f\in\dot{\mathcal{B}}^{3/2}_{1,1}(\R^3)$ and $g\in\dot{\mathcal{B}}^{1}_{1,1}(\R^3)$
\begin{equation}\label{dis:wave}
\begin{split}
&\left\|\cos(t\sqrt{H_{a,1}})f\right\|_{L^\infty(\R^3)}\lesssim\frac{1}{\sin t}\|f\|_{\dot{\mathcal{B}}^{3/2}_{1,1}(\R^3)},\\
&\left\|\frac{\sin(t\sqrt{H_{a,1}})}{\sqrt{H_{a,1}}}g\right\|_{L^\infty(\R^3)}\lesssim\frac{1}{\sin t}\|g\|_{\dot{\mathcal{B}}^{1}_{1,1}(\R^3)}.
\end{split}
\end{equation}
\end{theorem}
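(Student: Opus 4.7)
The plan is to follow the four-step program outlined by the author: (1) a Mehler-type dispersive estimate for the Schr\"odinger semigroup $e^{-itH_{a,1}}$, (2) a Gaussian upper bound for the associated heat kernel, (3) Bernstein-type inequalities for spectral multipliers of $\sqrt{H_{a,1}}$, and (4) a M\"uller--Seeger subordination argument transferring Schr\"odinger dispersion to wave dispersion. The desired estimate \eqref{dis:wave} will emerge from Step 4 after dyadic summation against the Besov norm \eqref{Besov}.

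For Step 1, I would diagonalize $H_{a,1}$ by separation of variables. The spherical harmonics $\{Y_k^m\}$ supply an angular basis on $\mathbb{S}^2$; on each sector of fixed $k$, the radial reduction of $H_{a,1}$ is a Laguerre-type Schr\"odinger operator whose $L^2$-eigenbasis is built from associated Laguerre polynomials $L_n^{\nu_k}$ with an $a$- and $k$-dependent parameter $\nu_k$. Summing the resulting bilinear eigenfunction expansion in the radial quantum number $n$ via the Hardy--Hille identity collapses each angular sector into a closed Mehler-type kernel; estimating it uniformly in $k$ and assembling the angular pieces through the addition formula for spherical harmonics should produce
\begin{equation*}
\|e^{-itH_{a,1}} f\|_{L^\infty(\R^3)} \lesssim (\sin t)^{-3/2}\,\|f\|_{L^1(\R^3)}, \qquad t\in(0,\pi).
\end{equation*}

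For Step 2, since $V\ge 0$ away from the origin, a Trotter (equivalently Feynman--Kac) comparison dominates the perturbed heat kernel pointwise by the free Gaussian:
\begin{equation*}
0 \le e^{-tH_{a,1}}(x,y) \le C\,t^{-3/2}\exp\!\bigl(-|x-y|^2/(ct)\bigr), \qquad t>0.
\end{equation*}
Combined with standard spectral-multiplier theory for operators with such Gaussian bounds, this produces in Step 3 the Bernstein inequality
\begin{equation*}
\|\psi(2^{-j}\sqrt{H_{a,1}})\|_{L^p\to L^q} \lesssim 2^{3j(1/p-1/q)}, \qquad 1\le p\le q\le\infty,
\end{equation*}
for any $\psi\in C_c^\infty(\R_+)$, which controls the dyadic blocks in \eqref{Besov}. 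In Step 4, M\"uller--Seeger's formula expresses $\cos(t\sqrt{H_{a,1}})$ and $\sin(t\sqrt{H_{a,1}})/\sqrt{H_{a,1}}$ as explicit time integrals of $e^{-isH_{a,1}}$ against an integrable weight; inserting the dispersive bound of Step 1 into each dyadic shell $\psi_j(\sqrt{H_{a,1}})f$ and using the Bernstein estimates of Step 3 to balance the Sobolev losses yields frequency-localized wave dispersive bounds whose $\ell^1$-summation in $j$ delivers \eqref{dis:wave}.

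\emph{Main obstacle.} The principal difficulty lies in Step 1: the Mehler-type kernel must be estimated uniformly as $\nu_k\to\infty$, which requires a careful asymptotic analysis of the Hardy--Hille expansion when the Laguerre parameter is large, and the ensuing angular summation must remain controlled in the short-time regime $t\to 0^+$ where the kernel concentrates along the diagonal. Steps 2--4 are, by contrast, comparatively standard consequences of Gaussian heat-kernel bounds together with the subordination machinery.
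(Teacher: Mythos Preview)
Your proposal is correct and follows the paper's four-step strategy exactly: the Schr\"odinger dispersive estimate via the Laguerre/spherical-harmonic spectral decomposition and the Hardy--Hille (Poisson) formula, the Gaussian heat-kernel bound feeding into Bernstein inequalities, and the M\"uller--Seeger subordination producing a frequency-localized wave estimate whose $\ell^1$-sum in $j$ gives \eqref{dis:wave}. The sole minor deviation is in Step~2, where the paper obtains the Gaussian bound by explicitly constructing the heat kernel (parallel to the Schr\"odinger kernel) and reusing the same uniform kernel estimate from Step~1, whereas your Feynman--Kac domination by the free heat kernel is a shorter and equally valid route for $a\ge 0$.
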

\begin{remark}
Theorem \ref{thm:wave} is an analog of the main result of \cite{FZZ22}, in which the dispersive estimate \eqref{dis:wave} for the Aharonov-Bohm Hamiltonian (a Schr\"odinger operator with a scaling-critical magnetic potential)
\begin{equation*}
\mathcal{L}_{\mathbf{A}}:=\left(i\nabla+\frac{\mathbf{A}(x/|x|)}{|x|}\right)^2,\quad \mathbf{A}\in W^{1,\infty}(\mathbb{S}^1;\R^2)
\end{equation*}
was obtained by using a suitable representation of the fundamental solution of the wave equation.
Thanks to the appearance of the quadratic potential $|x|^2$ in \eqref{H-ab}, we are allowed to obtain an explicit representation formula for the solution of the Schr\"odinger and heat equations via the spectral theorem since the spectral properties of the Hamiltonian $H_{a,1}$ are already well-understood (see \cite[Proposition 3.2]{FFFP13}). This is not surprising because one can regard the operator \eqref{H-ab} as the harmonic oscillator perturbed by an inverse square potential. In light of the argument of D'Ancona-Pierfelice \cite{DP05}, the key ingredient to prove the desired dispersive estimate \eqref{dis:wave} is to obtain a Gaussian type upper bound for the kernel of the heat semigroup $e^{-tH_{a,1}}$ in the sense of Simon \cite{Sim82}. Similar results based on such a strategy can be found in the previous works \cite{DP05,DPR10,Pie07}.
\end{remark}
As a consequence of Theorem \ref{thm:wave}, we can obtain the Strichartz estimates for the propagator $e^{it\sqrt{H_{a,1}}}$.
\begin{theorem}[Strichartz estimate]\label{thm:str}
Let $H_{a,1}$ be the Hamiltonian given by \eqref{H-ab} with $b=1$ and $a>-\frac{1}{4}$ and let $u$ be given by \eqref{wave:solution}. If the pair $(q,r)$ satisfies the condition \eqref{adm:scaling} and $s$ given by \eqref{cd:gap} belongs to the interval $[0,3/2)$, then there exists some constant $C=C_{a,q,r,s}>0$ independent of $f,g$ such that
\begin{equation}\label{est:str}
\|u\|_{L_t^q\left((0,\pi);L_x^r(\R^3)\right)}\leq C\left(\|f\|_{\dot{\mathcal{H}}^s_{a,1}(\R^3)}+\|g\|_{\dot{\mathcal{H}}^{s-1}_{a,1}(\R^3)}\right).
\end{equation}
\end{theorem}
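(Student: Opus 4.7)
The plan is to reduce \eqref{est:str} to a dyadic version via the Littlewood-Paley decomposition adapted to $H_{a,1}$, apply Keel-Tao's $TT^*$ machinery \cite{KT98} at each frequency scale with dispersive input coming from Theorem \ref{thm:wave} and $L^2$-energy from the spectral theorem (unitarity $\|e^{\pm it\sqrt{H_{a,1}}}\phi\|_{L^2}=\|\phi\|_{L^2}$), and finally reassemble by a square-function argument. Writing $2\cos(t\sqrt{H_{a,1}})=e^{it\sqrt{H_{a,1}}}+e^{-it\sqrt{H_{a,1}}}$ and similarly for the sine factor (after absorbing one power of $\sqrt{H_{a,1}}$ into the data), matters reduce to the half-wave Strichartz estimate
\[
\bigl\|e^{it\sqrt{H_{a,1}}}\phi\bigr\|_{L^q_t((0,\pi);L^r_x(\R^3))}\lesssim \|\phi\|_{\dot{\mathcal{H}}^s_{a,1}(\R^3)}
\]
for the admissible range stated in the theorem.

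\textbf{Frequency-localized dispersive bound.} Let $P_j:=\psi_j(\sqrt{H_{a,1}})$. The Bernstein-type inequalities that the paper establishes as a by-product of the Gaussian heat-kernel upper bound give $\|P_j\phi\|_{\dot{\mathcal{B}}^{3/2}_{1,1}(\R^3)}\lesssim 2^{3j/2}\|P_j\phi\|_{L^1(\R^3)}$, so Theorem \ref{thm:wave} (applied separately to the cosine and normalized-sine pieces of $e^{it\sqrt{H_{a,1}}}$) yields the half-wave dispersive estimate
\[
\bigl\|e^{i(t-\tau)\sqrt{H_{a,1}}}P_j\phi\bigr\|_{L^\infty(\R^3)}\lesssim \frac{2^{3j/2}}{|\sin(t-\tau)|}\|P_j\phi\|_{L^1(\R^3)},\quad t,\tau\in(0,\pi),\ t\neq\tau.
\]
Combined with the $L^2\to L^2$ unitarity, this pair is precisely the input required by Keel-Tao's abstract theorem with dispersive exponent $\sigma=1$.

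\textbf{Keel-Tao and summation.} Riesz-Thorin interpolation between the two bounds produces the $L^{r'}\to L^r$ estimate with constant $(2^{3j/2})^{1-2/r}|\sin(t-\tau)|^{-(1-2/r)}$, after which the standard $TT^*$-Hardy-Littlewood-Sobolev argument of \cite{KT98} delivers
\[
\bigl\|e^{it\sqrt{H_{a,1}}}P_j\phi\bigr\|_{L^q_t((0,\pi);L^r_x)}\lesssim 2^{js}\|P_j\phi\|_{L^2(\R^3)},\quad s=3\Bigl(\tfrac{1}{2}-\tfrac{1}{r}\Bigr)-\tfrac{1}{q},
\]
for every admissible $(q,r)$ (note that \eqref{adm:scaling} forces $q>2$, so the delicate endpoint of Keel-Tao is avoided entirely). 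Summing over $j$ by Minkowski's inequality in $\ell^2_j$, $L^q_t$ and $L^r_x$ (all exponents $\geq 2$) together with the Littlewood-Paley square-function equivalence on $L^r(\R^3)$ associated to $H_{a,1}$ (which follows from the Gaussian upper bound by Hebisch-Stein-type spectral-multiplier theory) finally gives
\[
\bigl\|e^{it\sqrt{H_{a,1}}}\phi\bigr\|_{L^q_t L^r_x}\lesssim \Bigl(\sum_{j\in\Z}2^{2js}\|P_j\phi\|_{L^2}^2\Bigr)^{1/2}=\|\phi\|_{\dot{\mathcal{H}}^s_{a,1}(\R^3)},
\]
as required.

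\textbf{Main obstacle.} The chief technical subtlety is that the dispersive weight $1/|\sin(t-\tau)|$ is singular at \emph{both} ends of the time difference, not only at $t=\tau$, an artifact of Mehler's formula and the purely discrete spectrum of $H_{a,1}$ that distinguishes the present setting from the Euclidean one. Near the diagonal $t=\tau$ one has $|\sin(t-\tau)|\gtrsim|t-\tau|$ and the classical HLS step applies verbatim; the only additional singularity inside $(0,\pi)^2$ occurs at the isolated corners $(t,\tau)\in\{(0,\pi),(\pi,0)\}$, which can be handled by localising via a smooth cutoff and applying either a Schur test or the local equivalence $|\sin(t-\tau)|\gtrsim\min(|t-\tau|,\pi-|t-\tau|)$ to reduce matters to a second HLS estimate with the same decay exponent $1-2/r$. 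The condition $a>-\tfrac14$ (the $3$D Hardy threshold) enters exactly here, as it guarantees the self-adjointness and positivity of $H_{a,1}$ required for the spectral multiplier calculus and the definition of the Sobolev spaces $\dot{\mathcal{H}}^s_{a,1}$.
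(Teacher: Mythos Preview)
Your strategy is essentially the paper's: obtain a frequency-localized $L^1\to L^\infty$ dispersive bound for $P_je^{it\sqrt{H_{a,1}}}$, feed it together with $L^2$ unitarity into Keel--Tao, and reassemble via the Littlewood--Paley square function. The paper carries this out in Section~7, citing the square-function equivalence \eqref{eq:LP} and Minkowski exactly as you do.

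Two points deserve correction. First, the exponent $2^{3j/2}$ you extract from Theorem~\ref{thm:wave} is not the right one for the half-wave. Writing $e^{it\sqrt{H_{a,1}}}P_j\phi=\cos(t\sqrt{H_{a,1}})P_j\phi+i\sin(t\sqrt{H_{a,1}})P_j\phi$ and applying the second inequality of \eqref{dis:wave} to $g=\sqrt{H_{a,1}}P_j\phi$ gives $\|g\|_{\dot{\mathcal B}^1_{1,1}}\sim 2^{2j}\|P_j\phi\|_{L^1}$, so the sine piece forces the dispersive constant $2^{2j}$, not $2^{3j/2}$. With $A=2^{2j}$ and $\sigma=1$, the Keel--Tao output is $A^{1/2-1/r}=2^{j(1-2/r)}=2^{js}$ on the sharp line $\frac1q+\frac1r=\frac12$, which is exactly what is needed; your $2^{3j/2}$ would give the wrong scaling. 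The paper avoids this bookkeeping by invoking Lemma~\ref{lem:disper} directly, whose bound $2^{3j}\langle 2^jt\rangle^{-1}$ already packages the correct $2^{2j}|t|^{-1}$ decay together with the $2^{3j}$ Bernstein cap at small times.

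Second, because the paper's dispersive input is $\langle 2^jt\rangle^{-1}$ rather than $(\sin t)^{-1}$, it never faces the secondary singularity at $|t-\tau|\to\pi$ that you flag as the ``main obstacle''; that complication is an artifact of routing through Theorem~\ref{thm:wave} instead of the sharper Lemma~\ref{lem:disper}. Your proposed fix (cutoff plus a second HLS) would work, but it is unnecessary if one uses the lemma.
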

\begin{remark}
Theorem \ref{thm:str} is new for the wave equation associated to the perturbed harmonic oscillator, although the main result of \cite{DPR10} can standardly yield similar Strichartz estimates for the wave equation associated to the standard harmonic oscillator $-\Delta+|x|^2$. We point out that the Sobolev norm $\dot{\mathcal{H}}^s_{a,1}$ at the right hand side of \eqref{est:str} \emph{can} be replaced by the Sobolev norm $\dot{H}_{osc}^s$ associated with the harmonic oscillator $-\Delta+|x|^2$. Indeed, for dimension $d\geq3$, the Sobolev space $\dot{H}^s_a$ related to the inverse square potential is equivalent to the usual Sobolev space $\dot{H}^s$ associated to the free Laplacian $-\Delta$ (see \cite[Theorem 1.2]{KMVZZ18}). However, the Sobolev norm $\dot{\mathcal{H}}^s_{a,1}$ at the right hand side of \eqref{est:str} \emph{cannot} be replaced by the classical Sobolev space $\dot{H}^s$ since $\dot{H}_{osc}^s$ is \emph{strictly} contained in $\dot{H}^s$ (see \cite[Theorem 3]{BT06}).
\end{remark}

\section{preliminaries}

In this section, we discuss some analytical features of the operator $H_{a,1}$, based on the spectral properties in \cite[Proposition 3.2]{FFFP13}. Throughout the paper, we will always use the notation $C$ to denote a universal positive constant that may vary from line to line.

\noindent The operator $H_{a,1}$, if expressed in the polar coordinates $(r,\vartheta)$, takes the form
\begin{equation}\label{op:polar}
H_{a,1}=-\partial_r^2-\frac{2}{r}\partial_r+\frac{1}{4}r^2+\frac{a-\Delta_{\mathbb{S}^2}}{r^2}.
\end{equation}
In the spherical coordinates $(\theta,\phi)$, we can write $\vartheta:=(\cos\theta\sin\phi,\sin\theta\sin\phi,\cos\phi)$ for $\vartheta\in\mathbb{S}^2$ with $(\theta,\phi)\in[0,2\pi]\times[0,\pi]$ and then $\Delta_{\mathbb{S}^2}$, the Laplace-Beltrami operator on the unit sphere $\mathbb{S}^2$, takes the form
\begin{equation}\label{op:sphere}
\Delta_{\mathbb{S}^2}=\csc^2\phi\partial_\theta^2+\cot\phi\partial_\phi+\partial_\phi^2.
\end{equation}
Let us consider the eigenvalue problem
\begin{equation}\label{pro-1}
\csc^2\phi\partial_\theta^2Y+\cot\phi\partial_\phi Y+\partial_\phi^2Y=-\gamma Y,\quad \lambda\geq0
\end{equation}
and find solutions of the form $Y(\theta,\phi)=\Theta(\theta)\Phi(\phi)$.
By the method of separation of variables, \eqref{pro-1} reduces to the following two ODEs
\begin{align}
&\Theta''(\theta)+\beta\Theta(\theta)=0,\label{pro-11}\\
&\Phi''(\phi)+\cot\phi\Phi'(\phi)+(\mu-\beta\csc^2\phi)\Phi(\phi)=0.\label{pro-12}
\end{align}
The first equation \eqref{pro-11} is easy to handle, whose solution $\Theta\in L^2([0,2\pi];\mathbb{C})$ is given by $\Theta(\theta)=e^{im\theta}$ with the corresponding eigenvalue $\beta=m^2,m\in\mathbb{Z}$. The second equation \eqref{pro-12} is thus reduced to
\begin{equation}\label{pro-2}
\Phi''(\phi)+\cot\phi\Phi'(\phi)+(\mu-m^2\csc^2\phi)\Phi(\phi)=0.
\end{equation}
If we make the change of variables
\begin{equation*}
\xi=\cos\phi,\quad \omega(\xi)=\Phi(\cos\phi),
\end{equation*}
then \eqref{pro-2} is transformed into
\begin{equation}\label{pro-3}
(1-\xi^2)\omega''(\xi)-2\xi\omega'(\xi)+\left(\mu-\frac{m^2}{1-\xi^2}\right)\omega(\xi)=0
\end{equation}
or, equivalently, the self-adjoint form
\begin{equation*}
[(1-\xi^2)\omega'(\xi)]'+\left(\mu-\frac{m^2}{1-\xi^2}\right)\omega(\xi)=0.
\end{equation*}
Note that \eqref{pro-3} is exactly the generalized Legendre equation, whose solution is given by $\Phi(\phi)=P_k^m(\cos\phi)$, the associated Legendre polynomial of degree $k$ and order $m$ with $|m|\leq k$ and the corresponding eigenvalue is $\mu_k=k(k+1)$ with multiplicity $2k+1$. Therefore, we obtain the desired ($L^2$-normalised) eigenfunction of \eqref{pro-1}
\begin{equation}\label{sph:harmonics}
Y_m^k(\vartheta):=Y_m^k(\theta,\phi)=\left(\frac{2m+1}{4\pi}\frac{(k-m)!}{(k+m)!}\right)^{1/2}P_k^m(\cos\phi)e^{im\theta},\quad k,m\in\mathbb{Z},k\geq0,
\end{equation}
which is the usual spherical harmonics on $\mathbb{S}^2$. Associated the spherical harmonics in \eqref{sph:harmonics}, the zonal function $Z_\vartheta^{(k)}(\vartheta')$ is given by
\begin{equation}\label{zonal:sphe}
Z_\vartheta^{(k)}(\vartheta'):=\sum_{m=-k}^kY_m^k(\vartheta)\overline{Y_m^k(\vartheta')}.
\end{equation}
It is well-known that the zonal function $Z_\vartheta^{(k)}(\vartheta')$ has a uniform upper bound (see e.g. \cite{Mul66})
\begin{equation}\label{zonal:up-bd}
|Z_\vartheta^{(k)}(\vartheta')|\leq Z_\vartheta^{(k)}(\vartheta)=\frac{2k+1}{4\pi},\quad \forall k\geq0,\quad \vartheta,\vartheta'\in\mathbb{S}^2.
\end{equation}
Note that the family of eigenfunctions $Y_m^k(\vartheta),k\in\mathbb{N},m\in\mathbb{Z}$ given in \eqref{sph:harmonics} forms a complete orthonormal basis for $L^2(\mathbb{S}^2)$. For a function $\varphi\in L^2(\R^3)$, one can expand $\varphi$ into Fourier series of the form
\begin{align}\label{exp:series}
\varphi(x):=\varphi(r,\vartheta)&=\sum_{k=0}^\infty\sum_{m=-k}^kY_m^k(\theta,\phi)\int_0^{2\pi}\int_0^\pi\varphi(r,\theta',\phi')\overline{Y_m^k(\theta',\phi')}\mathrm{d}\theta'\sin\phi'\mathrm{d}\phi'\nonumber\\
&=\sum_{k=0}^\infty\int_{\mathbb{S}^2}\varphi(r,\vartheta')\overline{Z^{(k)}_\vartheta(\vartheta')}\mathrm{d}\sigma(\vartheta'),
\end{align}
where $Y_m^k(\theta,\phi)$ is given in \eqref{sph:harmonics} and $Z^{(k)}_\vartheta(\vartheta')$ is given in \eqref{zonal:sphe}.
One can always expand an $L^2$ function into a Fourier series in terms of the spherical harmonics \eqref{sph:harmonics} or the zonal function \eqref{zonal:sphe} as \eqref{exp:series}.
In view of \eqref{op:polar}, the action of the operator $H_{a,1}$ on each eigenspace $\mathcal{H}^k:=\mathrm{span}\{Y_m^k:m\in\mathbb{Z},|m|\leq k\}$ (whose dimension is $\dim\mathcal{H}^k=2k+1$) can be given via
\begin{equation}\label{op:polar-2}
H_{a,1}=-\partial_r^2-\frac{2}{r}\partial_r+\frac{1}{4}r^2+\frac{\mu_k+a}{r^2},\quad \mu_k=k(k+1).
\end{equation}
We are now in the right position to give the spectrum of $H_{a,1}$. Let us consider the eigenvalue problem for the operator $H_{a,1}$
\begin{equation}\label{eigen-p}
-\Delta e(x)+\frac{a}{|x|^2}e(x)+\frac{|x|^2}{4}e(x)=\lambda e(x).
\end{equation}
In view of \eqref{exp:series} and \eqref{op:polar-2}, one can obtain the eigenvalues and eigenfunctions of the ODE \eqref{eigen-f} via a suitable transform (see the transform below (3.6) in the proof of \cite[Proposition 3.2]{FFFP13}). Let
\begin{equation}\label{alpha-beta}
\alpha_k=\frac{1}{2}-\sqrt{\left(\frac{1}{2}+k\right)^2+a},\quad \text{and}\quad \beta_k=\sqrt{\left(\frac{1}{2}+k\right)^2+a}
\end{equation}
so that $\alpha_k+\beta_k=\frac{1}{2}$ for $k\in\mathbb{N}$. Then the eigenvalue of the operator $H_{a,1}$ in \eqref{eigen-p} can be given in terms of \eqref{alpha-beta} by
\begin{equation}\label{eigen-v}
\lambda_{m,k}=2m+\frac{3}{2}-\alpha_k=2m+1+\beta_k,\quad m,k\in\mathbb{N}:=\{0,1,2,\cdots\}
\end{equation}
with the corresponding ($L^2$-normalised) eigenfunction
\begin{equation}\label{eigen-f}
e_{m,k}(x)=\left(\frac{m!}{2^{\beta_k}\Gamma(m+1+\beta_k)}\right)^{1/2}|x|^{-\alpha_k}e^{-\frac{|x|^2}{4}}L_m^{\beta_k}\left(\frac{|x|^2}{2}\right)\psi_k\left(\frac{x}{|x|}\right),
\end{equation}
where $\psi_k$ is given via
\begin{equation}\label{psi-k}
\psi_k(\vartheta)\overline{\psi_k(\vartheta')}=\sum_{n=-k}^kY_n^k(\vartheta)\overline{Y_n^k(\vartheta')}
\end{equation}
and $L_m^\alpha(t)$ stands for the generalized Laguerre polynomials
\begin{equation}\label{Laguerre}
L_m^\alpha(t)=\frac{(1+\alpha)_m}{m!}\sum_{k=0}^m\frac{(-m)_k}{(1+\alpha)_k}\frac{t^k}{k!}.
\end{equation}
Note that the zonal function $Z^{(k)}_\vartheta(\vartheta')$ in \eqref{zonal:sphe} can be expressed in terms of $\psi_k$ in \eqref{psi-k} as
\begin{equation*}
Z^{(k)}_\vartheta(\vartheta')=\psi_k(\vartheta)\overline{\psi_k(\vartheta')}.
\end{equation*}
In general, for a well-behaved function $F$, one can define the functional $F(H_{a,1})$ via the spectral theorem
\begin{equation*}
\left(F(H_{a,1})\varphi\right)(x)=\int_{\R^3}\sum_{m,k=0}^\infty F(\lambda_{m,k})e_{m,k}(x)\overline{e_{m,k}(y)}\varphi(y)\mathrm{d}y,
\end{equation*}
where $\lambda_{m,k}$ is given in \eqref{eigen-v} and $e_{m,k}$ is given in \eqref{eigen-f}.
Before closing this section, we shall give a useful lemma concerning the uniform boundedness of the sum of the product of the zonal function in \eqref{zonal:sphe} and the modified Bessel functions of the first kind.
\begin{lemma}\label{lem:sum-ker}
Let $a>-\frac{1}{4}$ and $\beta_k$ be defined in \eqref{alpha-beta}. Define a function $K:\R_+\times\mathbb{S}^2\times\mathbb{S}^2\rightarrow\mathbb{C}$ by
\begin{equation}\label{funct:ker}
K(\rho,\vartheta,\vartheta')=\rho^{-\frac{1}{2}}\sum_{k=0}^\infty i^{-\beta_k}J_{\beta_k}(\rho)Z^{(k)}_\vartheta(\vartheta'),
\end{equation}
where $Z^{(k)}_\vartheta(\vartheta')$ denotes the zonal function in \eqref{zonal:sphe}. Then we have
\begin{align*}
\sup_{\rho\geq0 \atop \vartheta,\vartheta'\in\mathbb{S}^2}|K(\rho,\vartheta,\vartheta')|<+\infty&\quad \text{for}\quad a>0,\\
\sup_{\rho\geq0 \atop \vartheta,\vartheta'\in\mathbb{S}^2}\frac{|K(\rho,\vartheta,\vartheta')|}{1+\rho^{\beta_0-\frac{1}{2}}}<+\infty,&\quad \text{for}\quad -\frac{1}{4}<a<0.
\end{align*}
\end{lemma}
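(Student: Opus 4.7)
The plan is to isolate the $k=0$ term of the series \eqref{funct:ker}---the sole source of any singularity as $\rho\to 0^+$---and to control the tail $k\geq 1$ by combining standard Bessel bounds with the uniform zonal estimate \eqref{zonal:up-bd}. The Bessel inputs I rely on are the small-argument bound $|J_\nu(\rho)|\leq(\rho/2)^\nu/\Gamma(\nu+1)$ valid for $\nu>-1/2$, and the uniform large-argument bound $|J_\nu(\rho)|\leq C\rho^{-1/2}$ valid for $\nu\geq 0$ and $\rho\geq 1$. Note that $a>-1/4$ forces $\beta_0>0$ and $\beta_k\geq\sqrt{9/4+a}>\sqrt{2}$ for every $k\geq 1$, so all tail Bessel functions are well behaved at the origin.

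For the $k=0$ term, combined with $|Z^{(0)}_\vartheta(\vartheta')|\leq 1/(4\pi)$ the Bessel estimates yield $\rho^{-1/2}|J_{\beta_0}(\rho)|\leq C\rho^{\beta_0-1/2}$ on $0<\rho\leq 1$ and $\rho^{-1/2}|J_{\beta_0}(\rho)|\leq C\rho^{-1}$ on $\rho\geq 1$. When $a>0$ one has $\beta_0>1/2$, so the small-$\rho$ piece is bounded outright; when $-1/4<a<0$ it is only controlled by $C(1+\rho^{\beta_0-1/2})$, which is exactly the loss recorded in the statement. The $k=0$ contribution is thereby fully accounted for in both cases.

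For the tail $k\geq 1$ in the small-$\rho$ regime $\rho\leq 1$, the small-argument Bessel bound combined with $|Z^{(k)}_\vartheta(\vartheta')|\leq(2k+1)/(4\pi)$ produces a series majorized by $\sum_{k\geq 1}(2k+1)(\rho/2)^{\beta_k}/\Gamma(\beta_k+1)$, and since $\beta_k\sim k$, Stirling's formula makes this converge uniformly and bounded uniformly in $\rho\in[0,1]$. The genuinely hard case is the large-$\rho$ tail $\rho\geq 1$, which is the main technical obstacle: crude absolute-value bounds fail because $\rho^{-1/2}|J_{\beta_k}(\rho)|\cdot(2k+1)$ summed over $k$ with $\beta_k\leq\rho$ grows like $\rho$, so one must exploit the oscillations.

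To circumvent this I would compare to the reference case $a=0$, in which $\beta_k=k+\tfrac12$ and the addition formula $Z^{(k)}_\vartheta(\vartheta')=\frac{2k+1}{4\pi}P_k(\cos\omega)$ (with $\omega$ the angle between $\vartheta$ and $\vartheta'$) together with the Jacobi--Anger identity yields the exact closed form
\begin{equation*}
\rho^{-1/2}\sum_{k=0}^\infty i^{-(k+1/2)}J_{k+1/2}(\rho)\,\frac{2k+1}{4\pi}\,P_k(\cos\omega)=c\,e^{-i\rho\cos\omega},
\end{equation*}
which is manifestly uniformly bounded. The residual difference
\begin{equation*}
\rho^{-1/2}\sum_{k\geq 1}\bigl[i^{-\beta_k}J_{\beta_k}(\rho)-i^{-(k+1/2)}J_{k+1/2}(\rho)\bigr]Z^{(k)}_\vartheta(\vartheta')
\end{equation*}
is then controlled by exploiting $\beta_k-(k+\tfrac12)=a/(\beta_k+k+\tfrac12)=O(1/k)$ together with pointwise bounds on $\partial_\nu J_\nu(\rho)$ (from the standard integral representations) and on $\partial_\nu i^{-\nu}$. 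The $O(1/k)$ smallness of each difference offsets the $(2k+1)$ growth of the zonal bound, producing a uniformly convergent and uniformly bounded tail, closing both cases of the lemma.
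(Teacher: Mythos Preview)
Your overall decomposition---subtracting the $a=0$ reference series summed by Jacobi--Anger, then controlling the residual---is exactly the paper's strategy, and your treatment of the $k=0$ term and of the small-$\rho$ tail is correct and matches the paper. The gap is in the final paragraph, where you handle the large-$\rho$ residual
\[
\rho^{-1/2}\sum_{k\geq 1}\bigl[i^{-\beta_k}J_{\beta_k}(\rho)-i^{-(k+1/2)}J_{k+1/2}(\rho)\bigr]Z^{(k)}_\vartheta(\vartheta').
\]
You argue that $\beta_k-(k+\tfrac12)=O(1/k)$ combined with pointwise bounds on $\partial_\nu J_\nu(\rho)$ yields a difference of size $O(1/k)$, which ``offsets the $(2k+1)$ growth of the zonal bound.'' But $O(1/k)\cdot(2k+1)=O(1)$ per term, and a series of $O(1)$ terms does not converge; so this step, as written, fails. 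No uniform bound on $\partial_\nu[i^{-\nu}J_\nu(\rho)]$ (even assuming one of the form $C\rho^{-1/2}$, which itself is not uniform near the turning point $\rho\sim\nu$) can rescue a termwise absolute-value argument here: the oscillations in the angular variables must be exploited, not just those in $\rho$.

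The paper closes this gap by a genuinely different mechanism. It writes $J_\nu$ via the contour integral $J_\nu(\rho)=\frac{1}{2\pi i}\int_\Gamma e^{\frac{\rho}{2}(z-1/z)}z^{-1-\nu}\,\mathrm{d}z$, interchanges sum and integral, and expands $(iz)^{(k+1/2)-\beta_k}-1=-\frac{a}{2k+1}\log(iz)+O(k^{-2})$. The leading $-\frac{a}{2k+1}$ factor exactly cancels the $2k+1$ in $Z^{(k)}_\vartheta(\vartheta')=\frac{2k+1}{4\pi}P_k(\vartheta\cdot\vartheta')$, after which the Legendre generating function $\sum_k P_k(s)r^k=(1-2rs+r^2)^{-1/2}$ resums the series \emph{exactly} inside the contour integral. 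The resulting integrand has only square-root (or, in the degenerate case $\vartheta\cdot\vartheta'=\pm1$, simple-pole) singularities on the contour, which are then handled by a Plemelj--Sokhotskyi / principal-value argument to obtain a bound uniform in $\rho$. The $O(k^{-2})$ remainder is treated similarly (a logarithmic singularity), and only the $O(k^{-3})$ tail is bounded termwise. In short, the missing idea is that the leading correction must be \emph{resummed} via the Legendre generating function before estimating, rather than bounded term by term.
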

\begin{remark}\label{rem:converge}
We remark that Lemma \ref{lem:sum-ker} is summarized from \cite[Sect. 6]{FFFP13}.
The series expression in \eqref{funct:ker} should be understood locally, i.e. in the sense that there exists some $k_0\geq1$ such that the series
\begin{equation*}
\rho^{-\frac{1}{2}}\sum_{k=k_0+1}^\infty i^{-\beta_k}J_{\beta_k}(\rho)Z^{(k)}_\vartheta(\vartheta),\quad \forall\rho<R
\end{equation*}
is uniformly convergent for any fixed $R$ and
\begin{equation*}
K(\rho,\vartheta,\vartheta')-\rho^{-\frac{1}{2}}\sum_{k=0}^{k_0} i^{-\beta_k}J_{\beta_k}(\rho)Z^{(k)}_\vartheta(\vartheta)\in L_{loc}^\infty(\R_+\times\mathbb{S}^2\times\mathbb{S}^2,\mathbb{C}).
\end{equation*}
\end{remark}
\begin{proof}[Proof of Lemma \ref{lem:sum-ker}]
We give a proof for the benefits of the reader. We only present the proof for the case $a\geq0$ since this is enough for the purpose of the present paper. The proof of the other case $-\frac{1}{4}<a<0$ is similar but a little more complicated, and we may refer the interested readers to \cite[Sect. 6]{FFFP13}.

\noindent Our goal is to show that the function $K$ defined by \eqref{funct:ker} is uniformly bounded, i.e.
\begin{equation*}
\left|\rho^{-\frac{1}{2}}\sum_{k=0}^\infty i^{-\beta_k}J_{\beta_k}(\rho)Z^{(k)}_\vartheta(\vartheta')\right|<+\infty,\quad \forall(\rho,\vartheta,\vartheta')\in\R_+\times\mathbb{S}^2\times\mathbb{S}^2.
\end{equation*}
To this end, we split $K$ as
\begin{align}\label{split}
K&=\rho^{-\frac{1}{2}}\sum_{k=0}^\infty i^{-\beta_k}J_{\beta_k}(\rho)Z^{(k)}_\vartheta(\vartheta')\nonumber\\
&=\rho^{-\frac{1}{2}}\sum_{k=0}^\infty i^{k+\frac{1}{2}}J_{k+\frac{1}{2}}(\rho)Z^{(k)}_\vartheta(\vartheta')\nonumber\\
&\qquad +\rho^{-\frac{1}{2}}\sum_{k=0}^\infty\left(i^{-\beta_k}J_{\beta_k}(\rho)
-i^{k+\frac{1}{2}}J_{k+\frac{1}{2}}(\rho)\right)Z^{(k)}_\vartheta(\vartheta')\nonumber\\
:&=\Pi_1+\Pi_2.
\end{align}
In view of the Jacobi-Anger expansion for the plane waves (see e.g. \cite{Mul66,Wat44})
\begin{equation*}
e^{-ix\cdot y}=(2\pi)^{3/2}(|x||y|)^{-1/2}\sum_{k=0}^\infty i^kJ_{k+\frac{1}{2}}(|x||y|)Z^{(k)}_{x/|x|}(y/|y|),\quad x,y\in\R^3,
\end{equation*}
we get
\begin{equation}\label{JA-exp}
(2\pi)^{-3/2}e^{-i\rho\vartheta\cdot\vartheta'}=\rho^{-1/2}\sum_{k=0}^\infty i^kJ_{k+\frac{1}{2}}(\rho)Z^{(k)}_\vartheta(\vartheta').
\end{equation}
Hence, the first term $\Pi_1$ in \eqref{split} is obviously bounded, in view of \eqref{JA-exp}; actually, one has $\Pi_1=(2\pi i)^{-3/2}e^{-i\rho\vartheta\cdot\vartheta'}i$.

The second term $\Pi_2$ in \eqref{split} is bounded for $\rho\leq\epsilon$ with $\epsilon$ small enough. Indeed, in view of the bound for the zonal function \eqref{zonal:up-bd} and the bound for the Bessel function (see e.g. \cite{Wat44})
\begin{equation*}
|J_\nu(r)|\leq\frac{(r/2)^\nu}{\Gamma(1+\nu)}e^{r^2/4} \quad \forall \nu>0,r\geq0,
\end{equation*}
we have, for $\rho\leq\epsilon$ with $\epsilon$ small enough,
\begin{align*}
|\Pi_2|&=\left|\rho^{-\frac{1}{2}}\sum_{k=0}^\infty i^{-\beta_k}J_{\beta_k}(\rho)Z^{(k)}_\vartheta(\vartheta')\right|\\
&\leq\rho^{-\frac{1}{2}}\sum_{k=0}^\infty\frac{2k+1}{4\pi\Gamma(1+\beta_k)}(\rho/2)^{\beta_k}e^{\rho^2/4}\\
&\leq\frac{e^{\epsilon^2/4}}{4\sqrt{2}\pi}\sum_{k=0}^\infty\frac{2k+1}{\Gamma(1+\beta_k)}
(\rho/2)^{\beta_k-\frac{1}{2}}\\
&\leq\frac{(\rho/2)^{\beta_0-\frac{1}{2}}e^{\epsilon^2/4}}{4\sqrt{2}\pi}
\sum_{k=0}^\infty\frac{2k+1}{\Gamma(1+\beta_k)}\\
&\leq C\rho^{\beta_0-\frac{1}{2}},
\end{align*}
where $C=C(\epsilon)>0$ is a constant independent of $\rho,\vartheta,\vartheta'$.

\noindent For $\rho>\epsilon$, we use the contour integral representation for the Bessel function
\begin{equation*}
J_\nu(r)=\frac{1}{2\pi i}\int_\Gamma e^{\frac{r}{2}(z-\frac{1}{z})}\frac{\mathrm{d}z}{z^{1+\nu}}
\end{equation*}
to express the second term $\Pi_2$ in \eqref{split}
\begin{align}\label{int:cont}
\Pi_2&=\rho^{-\frac{1}{2}}\sum_{k=0}^\infty\left(i^{-\beta_k}J_{\beta_k}(\rho)
-i^{k+\frac{1}{2}}J_{k+\frac{1}{2}}(\rho)\right)Z^{(k)}_\vartheta(\vartheta')\\
=&\frac{\rho^{-1/2}}{2\pi i}\int_\Gamma e^{\frac{\rho}{2}(z-\frac{1}{z})}\left(\sum_{k=0}^\infty\left((iz)^{k+\frac{1}{2}-\beta_k}-1\right)
\frac{Z^{(k)}_\vartheta(\vartheta')}{(iz)^{k+\frac{1}{2}}}\right)\frac{\mathrm{d}z}{z},\nonumber
\end{align}
where $\Gamma\subset\mathbb{C}$ denotes the positive oriented contour running along the straight line from negative infinity to $z=-1$ below the negative real axis, the counterclockwise oriented unit circle from $z=-1$ to itself and the straight line from $z=-1$ to negative infinity above the negative real axis.
For convenience of statements, we denote by $\Gamma_1$ the counterclockwise oriented circumference of radius 1 and by $\Gamma_2$ the two straight lines running between $z=-\infty$ and $z=-1$, and we split the integral in \eqref{int:cont} accordingly, i.e.
\begin{equation*}
\int_\Gamma=\int_{\Gamma_1}+\int_{\Gamma_2}:=I_1+I_2.
\end{equation*}
In view of the analyticity of the integrand of \eqref{int:cont} outside $z=\R_-+i0\pm$, we can write the integral in \eqref{int:cont} more rigorously
\begin{equation*}
\int_\Gamma=\int_{\Gamma_1}+\int_{\Gamma_2}=\lim_{\varepsilon\rightarrow0+}
\left(\int_{\Gamma_1^\varepsilon}+\int_{\Gamma_2^\varepsilon}\right),
\end{equation*}
where $\Gamma_1^\varepsilon$ denotes the circumference of radius $1+\varepsilon$ centered at the origin and $\Gamma_2^\varepsilon$ denotes the lines running along $(-\infty,-1-\varepsilon)+i0\pm$.
Note that we have exchanged the order of sum and integral in \eqref{int:cont}, which is permitted for any $\rho,\vartheta,\vartheta'$. Indeed, since $|z|>1$ for all $z\in\Gamma_1^\varepsilon\cup\Gamma_2^\varepsilon$, one has the absolute convergence of the contour integral along $\Gamma_1^\varepsilon\cup\Gamma_2^\varepsilon$ for any $\rho,\vartheta,\vartheta'$ and hence the exchange of the order of sum and integral in \eqref{int:cont} is permitted by Fubini's theorem.

Let us consider $I_1$ (i.e. the integral along $\Gamma_1$) firstly. In view of
\begin{equation*}
\beta_k-(k+\frac{1}{2})=\sqrt{\left(k+\frac{1}{2}\right)^2+a}-(k+\frac{1}{2})=\frac{a}{2k+1}+O(k^{-3}),
\end{equation*}
we have
\begin{align}\label{3-part}
(iz)^{k+\frac{1}{2}-\beta_k}-1
&=-\frac{a}{2k+1}\log(iz)+\frac{a^2}{2}\frac{(\log(iz))^2}{(2k+1)^2}+\frac{O(1)}{k^3}\nonumber\\
&:=I_{11}(z,k)+I_{12}(z,k)+I_{13}(z,k)\quad \text{as}\quad k\rightarrow+\infty
\end{align}
uniformly in $z\in\Gamma_1$. Note that $z^{-\beta_k}$ and $z^{k+\frac{1}{2}}$ admit a branch-cut at the negative real axis of the complex plane (i.e. at $z\in\R_-$), then the function $\log(iz)$ (and $\sqrt{iz}$, etc.) will admit a branch-cut at $z\in\R_-$ as well. Hence, we can, taking \eqref{3-part} into consideration, write $I_1$ (i.e. the integral over $\Gamma_1$ in \eqref{int:cont}) as
\begin{align}\label{3-term:123}
I_1&=\frac{\rho^{-1/2}}{2\pi i}\int_{\Gamma_1} e^{\frac{\rho}{2}(z-\frac{1}{z})}\left(\sum_{k=0}^\infty\left(I_{11}(z,k)+I_{12}(z,k)+I_{13}(z,k)\right)
\frac{Z^{(k)}_\vartheta(\vartheta')}{(iz)^{k+\frac{1}{2}}}\right)\frac{\mathrm{d}z}{z}\nonumber\\
&:=\mathcal{I}_{11}+\mathcal{I}_{12}+\mathcal{I}_{13},
\end{align}
where every summand $\mathcal{I}_{1j},j=1,2,3$ corresponds to the integrand with the corresponding $I_{1j},j=1,2,3$. Since $|z|\equiv1$ for all $z\in\Gamma_1$, we have $|e^{\frac{\rho}{2}(z-\frac{1}{z})}|=1$ and thus, in view of \eqref{zonal:up-bd} again, we obtain
\begin{equation*}
|\mathcal{I}_{13}|\lesssim\rho^{-1/2}\sum_{k=0}^\infty\frac{2k+1}{k^3}\lesssim\epsilon^{-1/2},\quad \text{for}\quad \rho>\epsilon,
\end{equation*}
which indicates that $|\mathcal{I}_{13}|$ is bounded uniformly in $\rho,\vartheta,\vartheta'$.
From the definitions of the spherical harmonics \eqref{sph:harmonics} and the zonal functions \eqref{zonal:sphe}, one can recover the well-known identity (see \cite{Mul66})
\begin{equation}\label{ident:sz}
Z^{(k)}_\vartheta(\vartheta')=\frac{2k+1}{4\pi}P_k(\vartheta\cdot\vartheta')
\end{equation}
where $P_k$ is the Legendre polynomial of degree $k$ (a solution of the Legendre equation \eqref{pro-3}).
In view of the identity \eqref{ident:sz} and the generating function formula for the Legendre polynomials (see e.g. \cite[(22.9.12)]{AS65})
\begin{equation}\label{gen:poly}
\sum_{k=0}^\infty P_k(s)r^k=(1-2rs+r^2)^{-1/2},\quad |r|<1,
\end{equation}
we have, for $\mathcal{I}_{11}$,
\begin{align}\label{use-form}
&-a\log(iz)\sum_{k=0}^\infty\frac{Z^{(k)}_\vartheta(\vartheta')}{2k+1}(iz)^{-(k+\frac{1}{2})}\nonumber\\
&=-\frac{a\log(iz)}{4\pi}\sum_{k=0}^\infty P_k(\vartheta\cdot\vartheta')(iz)^{-(k+\frac{1}{2})}\nonumber\\
&=-\frac{a(iz)^{-1/2}\log(iz)}{4\pi\sqrt{1+2iz^{-1}\vartheta\cdot\vartheta'-z^{-2}}}\nonumber\\
&=-\frac{a(-iz)^{1/2}\log(iz)}{4\pi\sqrt{z^2+2iz(\vartheta\cdot\vartheta')-1}},\quad |z|>1
\end{align}
and hence
\begin{equation}\label{app:1st-term}
\mathcal{I}_{11}=-\frac{a\rho^{-1/2}}{8\pi^2i}\lim_{\varepsilon\rightarrow0+}\int_{\Gamma_1^\varepsilon} e^{\frac{\rho}{2}(z-\frac{1}{z})}\frac{(-iz)^{1/2}\log(iz)}{\sqrt{z^2+2iz(\vartheta\cdot\vartheta')-1}}
\frac{\mathrm{d}z}{z}.
\end{equation}
Note that $|e^{\frac{\rho}{2}(z-\frac{1}{z})}|\equiv1$ for all $\rho>0$ and $z\in\Gamma_1$. If $\vartheta\cdot\vartheta'$ stays far away from $\pm1$, then $1-(\vartheta\cdot\vartheta')^2>\epsilon$ and the term $\mathcal{I}_{11}$ in \eqref{app:1st-term} is uniformly bounded with respect to $\varepsilon\rightarrow0+,\rho>\epsilon$, due to the integrability of the two singularities of square root type of the integrand at $z_{\pm}=-i(\vartheta\cdot\vartheta')\pm\sqrt{1-(\vartheta\cdot\vartheta')^2}$.
If $\vartheta\cdot\vartheta'=\pm1$, one sees that these two singularities at $z_{\pm}$ collapse into a stronger singularity at $z=\pm i$. Let us assume $\vartheta\cdot\vartheta'=-1$ (the other case $\vartheta\cdot\vartheta'=1$ can be similarly treated) for concreteness, then we obtain
\begin{align}\label{PS:formula}
\lim_{\varepsilon\rightarrow0+}&\int_{\Gamma_1^\varepsilon}e^{\frac{\rho}{2}(z-\frac{1}{z})}
\frac{(-iz)^{1/2}\log(iz)}{z-i}\frac{\mathrm{d}z}{z}\\
&=i\pi^2e^{i\rho}+\mathrm{P.V.}\int_{\Gamma_1}e^{\frac{\rho}{2}(z-\frac{1}{z})}
\frac{(-iz)^{1/2}\log(iz)}{z-i}\frac{\mathrm{d}z}{z},
\end{align}
which is simply the Plemelj-Sokhotskyi formula (see e.g. \cite{AF03}) for the limit of Cauchy integrals when tending to a singular point. The second term at the right hand side of \eqref{PS:formula} is a singular integral of the function $e^{\frac{\rho}{2}(z-\frac{1}{z})}(iz)^{-1/2}\log(iz)$, which is differentiable with respect to $z=e^{i\theta}$ for $\theta$ varying in some neighborhood of $\frac{\pi}{2}$ (remember that the discontinuity of the argument of $z$ is along the negative real axis of the complex plane).
Due to the boundedness of the principal value of a Cauchy integral of a differentiable function (see \cite{AF03}), we conclude the boundedness of the term $\mathcal{I}_{11}$ for all $\rho>\epsilon$.
We stress that it is still possible for the principal value integral to be divergent as $\rho\rightarrow\infty$ despite its boundedness for any $\rho>\epsilon$. To exclude this undesired possibility, let us consider the neighborhood of $z=i$ in $\Gamma_1$
\begin{equation*}
\Gamma_1^{\eta}:=\{z=ie^{i\theta}:|\theta|<\eta\ll1\}.
\end{equation*}
In view of the fact
\begin{equation*}
\frac{(\pi+\theta)e^{i\theta/2}}{i(e^{i\theta}-1)}=-\frac{\pi}{\theta}+O(1)\quad \text{as}\quad \theta\rightarrow0,
\end{equation*}
we compute the integral over $\Gamma_1^{\eta}$ for any $\rho\gg1$
\begin{align*}
\mathrm{P.V.}\int_{-\eta}^\eta e^{i\rho\cos\theta}\frac{(\pi+\theta)e^{i\theta/2}}{i(e^{i\theta}-1)}\mathrm{d}\theta
&=\mathrm{P.V.}\int_{-\eta}^\eta e^{i\rho\cos\theta}\left(-\frac{\pi}{\theta}+O(1)\right)\mathrm{d}\theta\\
&=O(1)\mathrm{P.V.}\int_{-\eta}^\eta e^{i\rho\cos\theta}\mathrm{d}\theta-
\pi\mathrm{P.V.}\int_{-\eta}^\eta\frac{e^{i\rho\cos\theta}}{\theta}\mathrm{d}\theta.
\end{align*}
The second principal value integral obviously vanishes (due to the parity of the integrand), i.e.
\begin{equation*}
\mathrm{P.V.}\int_{-\eta}^\eta\frac{e^{i\rho\cos\theta}}{\theta}\mathrm{d}\theta\equiv0,
\end{equation*}
it thus follows (note that the first principal value integral is finite) that
\begin{equation*}
\mathrm{P.V.}\int_{-\eta}^\eta e^{i\rho\cos\theta}\frac{(\pi+\theta)e^{i\theta/2}}{i(e^{i\theta}-1)}\mathrm{d}\theta=O(1).
\end{equation*}
Therefore, the integral along $\Gamma_1^{\eta}$ is uniformly bounded for all $\rho$. The integral over $\Gamma_1\setminus\Gamma_1^{\eta}$ is uniformly bounded as well since $|e^{\frac{\rho}{2}(z-\frac{1}{z})}|\equiv1$ for all $z\in\Gamma_1$. Hence, the principal value integral over $\Gamma_1$ is bounded uniformly in $\rho,\vartheta,\vartheta'$.
If the singularities $z_{\pm}=-i(\vartheta\cdot\vartheta')\pm\sqrt{1-(\vartheta\cdot\vartheta')^2}$ are very close to each other (i.e. $|\vartheta\cdot\vartheta'|>1-\epsilon$), then, similarly as \eqref{PS:formula}, one can split the integral over $\Gamma_1$ into two parts
\begin{equation}\label{sp:arc}
\int_{\Gamma_1}=\int_{\overbrace{z_-,z_+}}+\int_{\Gamma_1\setminus\overbrace{z_-,z_+}},
\end{equation}
where $\overbrace{z_-,z_+}$ denotes the small arc of $\Gamma_1$ between $z_-$ and $z_+$. The second integral at the right hand side of \eqref{sp:arc} can be easily bounded just like the principal value integral above and yields the same uniform boundedness with respect to $\vartheta,\vartheta'$. It is easy to see that there exists some $\hat{\theta}\in\R$ such that
\begin{equation*}
z_{\pm}=-i(\vartheta\cdot\vartheta')\pm\sqrt{1-(\vartheta\cdot\vartheta')^2}=ie^{\pm i\hat{\theta}},
\end{equation*}
then the first term at the right hand side of \eqref{sp:arc} becomes the integral of
\begin{align*}
&\frac{(\pi+\theta)e^{i(\rho\cos\theta+\frac{\theta}{2})}}{\sqrt{(ie^{i\theta}-ie^{-i\hat{\theta}})(ie^{i\theta}-ie^{i\hat{\theta}})}}\\
&=\frac{\pi e^{i\rho\cos\theta}}{\sqrt{|\theta+\hat{\theta}||\theta-\hat{\theta}|}}
\left(1+O(|\theta-\hat{\theta}|)+O(|\theta+\hat{\theta}|)\right)\times
\begin{cases}
-1&\quad\text{if}\quad \theta<-\hat{\theta}\\
1&\quad\text{if}\quad\theta>\hat{\theta}\\
-i&\quad\text{if}\quad -\hat{\theta}<\theta<\hat{\theta}
\end{cases}\\
&=\frac{\pi e^{i\rho\cos\theta}\varphi(\theta)}{\sqrt{|\theta+\hat{\theta}||\theta-\hat{\theta}|}}
+O(|\theta-\hat{\theta}|^{-1/2})+O(|\theta+\hat{\theta}|^{-1/2}),
\end{align*}
where $\varphi$ is a suitable function satisfying $|\varphi(\theta)|\equiv1$.
Hence, we have
\begin{align*}
\left|\int_{\overbrace{z_-,z_+}}\right|&=\left|\pi\int_{-\hat{\theta}}^{\hat{\theta}}
\frac{e^{i\rho\cos\theta}}{\sqrt{\hat{\theta}^2-\theta^2}}\mathrm{d}\theta+O(1)\right|\\
&=\left|\pi\int_{-1}^1
\frac{e^{i\rho\cos(\hat{\theta}s)}}{\sqrt{1-s^2}}\mathrm{d}s+O(1)\right|\\
&\lesssim1\quad \text{uniformly with respect to}\quad \rho,\hat{\theta}.
\end{align*}
Therefore, the integral over $\Gamma_1$ is bounded uniformly in $\rho$ and $\epsilon$.

Finally, it remains to treat the term $\mathcal{I}_{12}$ in \eqref{3-term:123}
\begin{equation*}
\frac{a^2\rho^{-1/2}}{4\pi i}\int_{\Gamma_1}e^{\frac{\rho}{2}(z-\frac{1}{z})}\left(\sum_{k=0}^\infty
\frac{Z^{(k)}_\vartheta(\vartheta')}{(2k+1)^2}(iz)^{-(k+\frac{1}{2})}\right)(\log(iz))^2\frac{\mathrm{d}z}{z}.
\end{equation*}
Let
\begin{align*}
G(z,\vartheta,\vartheta'):&=\sum_{k=0}^\infty
\frac{Z^{(k)}_\vartheta(\vartheta')}{(2k+1)^2}(iz)^{-(k+\frac{1}{2})},\\
g(z,\vartheta,\vartheta'):&=\frac{1}{2i}\sum_{k=0}^\infty
\frac{Z^{(k)}_\vartheta(\vartheta')}{2k+1}(iz)^{-(k+\frac{3}{2})},
\end{align*}
then one easily verifies
\begin{equation*}
G'_z(z,\vartheta,\vartheta')=g(z,\vartheta,\vartheta').
\end{equation*}
In view of the formulas \eqref{ident:sz} and \eqref{gen:poly}, we have
\begin{equation*}
g(z,\vartheta,\vartheta')=\frac{(iz)^{-3/2}}{8\pi i\sqrt{1+2iz^{-1}(\vartheta\cdot\vartheta')-z^{-2}}}.
\end{equation*}
Since $g(z,\vartheta,\vartheta')$ has a square root type singularity when $\vartheta\cdot\vartheta'\neq-1$ or has a $(z-i)^{-1}$ type singularity at $z=i$ when $\vartheta\cdot\vartheta'=-1$ (we only consider this case here, the other case $\vartheta\cdot\vartheta'=1$ is similar as before), we conclude that $G(z,\vartheta,\vartheta')$ has at most a log-type singularity, which is integrable, and hence the corresponding integral is uniformly bounded. Therefore, the term $I_1$ (i.e. the integral \eqref{int:cont} over $\Gamma_1$) is uniformly bounded.

We proceed to consider the term $I_2$
\begin{equation}\label{I-2}
I_2=\frac{\rho^{-1/2}}{2\pi i}\int_{\Gamma_2} e^{\frac{\rho}{2}(z-\frac{1}{z})}\left(\sum_{k=0}^\infty\left((iz)^{k+\frac{1}{2}-\beta_k}-1\right)
\frac{Z^{(k)}_\vartheta(\vartheta')}{(iz)^{k+\frac{1}{2}}}\right)\frac{\mathrm{d}z}{z}.
\end{equation}
After changing variables $z=-e^s$, exchanging the order of sum and integral and rearranging terms subsequently, we can rewrite $I_2$ of \eqref{I-2} (similar to $I_1$) as
\begin{equation}\label{I-2:split}
\begin{split}
I_2&=\frac{\rho^{-1/2}}{2\pi i}\sum_{k=0}^\infty Z^{(k)}_\vartheta(\vartheta')\left(\Phi_k(\rho)+\Psi_k(\rho)\right)\\
&:=\mathcal{I}_{21}+\mathcal{I}_{22},
\end{split}
\end{equation}
where
\begin{align*}
\Phi_k(\rho) & =2i^{-(1+\beta_k)}\sin(\pi\beta_k)\int_0^\infty e^{-\rho\sinh s}(e^{-s\beta_k}-e^{-(k+\frac{1}{2})s})\mathrm{d}s, \\
\Psi_k(\rho) & =-2i\int_0^\infty e^{-\rho\sinh s-(k+\frac{1}{2})s}(i^{-\beta_k}\sin(\pi\beta_k)-i^{-(k+\frac{1}{2})}\sin\pi(k+\frac{1}{2}))\mathrm{d}s.
\end{align*}
In view of \eqref{zonal:up-bd} again, we obtain
\begin{align*}
|\mathcal{I}_{21}|&=\left|\frac{\rho^{-1/2}}{2\pi i}\sum_{k=0}^\infty Z^{(k)}_\vartheta(\vartheta')\Phi_k(\rho)\right| \\
&\lesssim\rho^{-1/2}\sum_{k=0}^\infty(2k+1)\int_0^\infty e^{-\rho\sinh s}\left|e^{-s\beta_k}-e^{-(k+\frac{1}{2})s}\right|\mathrm{d}s\\
&\lesssim\rho^{-1/2}\sum_{k=0}^\infty(2k+1)\left|\frac{1}{\beta_k}-\frac{1}{k+\frac{1}{2}}\right|
\lesssim\rho^{-1/2}.
\end{align*}
To address $\mathcal{I}_{22}$ in \eqref{I-2:split}, we use the fact
\begin{align*}
&i^{-\beta_k}\sin(\pi\beta_k)-i^{-(k+\frac{1}{2})}\sin\pi(k+\frac{1}{2})\\
&=(i^{-\beta_k}-i^{-(k+\frac{1}{2})})\sin(\pi\beta_k)
+i^{-(k+\frac{1}{2})}\left(\sin(\pi\beta_k)-\sin\pi(k+\frac{1}{2})\right)\\
&=i^{-(k+\frac{1}{2})}(-1)^{k+1}\left(\frac{a\pi i}{2(2k+1)}+\frac{(a\pi)^2}{8(2k+1)^2}\right)
+i^{-(k+\frac{1}{2})}(-1)^{k+1}\frac{(a\pi)^2}{2(2k+1)^2}+O(k^{-3})\\
&=i^{k+\frac{5}{2}}\frac{a\pi}{2(2k+1)}+i^{k+\frac{3}{2}}\frac{5\pi^2a^2}{8(2k+1)^2}+O(k^{-3})
\quad\text{as}\quad k\rightarrow+\infty
\end{align*}
and the formula \eqref{gen:poly} to obtain
\begin{align*}
|\mathcal{I}_{22}|&=\left|\frac{\rho^{-1/2}}{2\pi i}\sum_{k=0}^\infty Z^{(k)}_\vartheta(\vartheta')\Psi_k(\rho)\right|
\leq\Bigg|\rho^{-1/2}\int_0^\infty e^{-\rho\sinh s-\frac{s}{2}}\\
&\times\Bigg[\frac{a\pi}{4i^{3/2}\sqrt{1-ie^{-s}(\vartheta\cdot\vartheta')-e^{-2s}}}\\
&+\frac{5\pi^2a^2Z^{(k)}_\vartheta(\vartheta')}{16i^{-(k+\frac{3}{2})}e^{ks}}
\left((2k+1)^{-2}+O(k^{-3})\right)\Bigg]\Bigg|.
\end{align*}
By \eqref{zonal:up-bd} and
\begin{equation*}
\frac{1}{\sqrt{1-ie^{-s}(\vartheta\cdot\vartheta')-e^{-2s}}}\lesssim1+s^{-1/2},
\end{equation*}
we see
\begin{align*}
\Bigg|\sum_{k=0}^\infty&i^ke^{-ks}Z^{(k)}_\vartheta(\vartheta')\left((2k+1)^{-2}+O(k^{-3})\right)\Bigg|\\
&\lesssim\sum_{k=0}^\infty(ke^{ks})^{-1}\lesssim\log|s|\quad\text{uniformly in}\quad \vartheta,\vartheta'.
\end{align*}
Hence, we can bound $\mathcal{I}_{22}$ in \eqref{I-2:split} by
\begin{equation*}
|\mathcal{I}_{22}|\lesssim\rho^{-1/2}\int_0^\infty e^{-\rho\sinh s-\frac{s}{2}}(1+s^{-1/2}+\log|s|)\mathrm{d}s\lesssim\rho^{-1/2}.
\end{equation*}
Remember that we have $\rho>\epsilon$ for some $\epsilon>0$ in this case, the uniform boundedness of $I_2$ follows.

\noindent Therefore, we obtain the desired bound
\begin{equation*}
\sup_{\rho\geq0 \atop \vartheta,\vartheta'\in\mathbb{S}^2}|K(\rho,\vartheta,\vartheta')|<+\infty
\end{equation*}
and the proof of Lemma \ref{lem:sum-ker} is finished.
\end{proof}

\section{dispersive estimate for Schr\"odinger flow}

In this section, we will construct a representation formula for the kernel of the Schr\"odinger propagator $e^{-itH_{a,1}}$ and then prove a dispersive inequality for this kernel. Such a dispersive inequality will be used to prove the main theorem (Theorem \ref{thm:wave}) of this paper, as mentioned in the introduction.

\begin{proposition}[Schr\"odinger kernel]\label{prop:S-ker}
Let $H_{a,1}$ be the operator given by \eqref{H-ab} and let $u$ be the unique solution to the Schr\"odinger equation
\begin{equation*}
\begin{cases}
i\partial_tu(t,x)=H_{a,1}u(t,x),\\
u(0,x)=f(x)\in L^2(\R^3),
\end{cases}
\end{equation*}
then the solution $u$ can be represented as
\begin{equation}\label{sol-rep:S}
u(t,x)=\left(e^{-itH_{a,1}}f\right)(x):=\int_{\R^3}K^S(x,y)f(y)\mathrm{d}y,
\end{equation}
where $K^S(x,y)$ corresponds to the kernel of the Schr\"odinger propagator $e^{-itH_{a,1}}$.

Moreover, if we write $K_t^S(r_1,\vartheta_1;r_2,\vartheta_2)$ for the kernel $K^S(x,y)$ in polar coordinates $(r,\vartheta)$, then we have
\begin{equation}\label{ker:S}
K_t^S(r_1,\vartheta_1;r_2,\vartheta_2)=\frac{e^{-\frac{r_1^2+r_2^2}{4i\tan t}}}{2i\sqrt{r_1r_2}\sin t}
\sum_{k=0}^\infty I_{\beta_k}\Bigg(\frac{r_1r_2}{2i\sin t}\Bigg)Z_{\vartheta_1}^{(k)}(\vartheta_2),
\end{equation}
where $Z_{\vartheta}^{(k)}(\vartheta')$ is given in \eqref{zonal:sphe} and $\beta_k$ in \eqref{alpha-beta}.
\end{proposition}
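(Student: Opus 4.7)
The plan is to apply the functional calculus with the explicit eigendata for $H_{a,1}$ summarized in \eqref{eigen-v}--\eqref{eigen-f}, and then collapse the resulting $m$-sum by the Hille--Hardy bilinear generating formula for the Laguerre polynomials.

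First, by the spectral theorem the propagator $e^{-itH_{a,1}}$ is an integral operator whose kernel reads
\[
K^S(t,x,y)=\sum_{k=0}^\infty\sum_{m=0}^\infty e^{-it\lambda_{m,k}}\,e_{m,k}(x)\,\overline{e_{m,k}(y)},
\]
with $\lambda_{m,k}=2m+1+\beta_k$. Passing to polar coordinates $x=r_1\vartheta_1$, $y=r_2\vartheta_2$ and substituting \eqref{eigen-f}, I would factor from each summand the $m$-independent quantities, namely the phase $e^{-it(1+\beta_k)}$, the gaussian $e^{-(r_1^2+r_2^2)/4}$, the radial weight $r_1^{-\alpha_k}r_2^{-\alpha_k}$, and the angular piece $\psi_k(\vartheta_1)\overline{\psi_k(\vartheta_2)}$. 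What remains is the inner sum
\[
S_k:=\sum_{m=0}^\infty\frac{m!\,w^m}{\Gamma(m+1+\beta_k)}L_m^{\beta_k}\!\left(\tfrac{r_1^2}{2}\right)L_m^{\beta_k}\!\left(\tfrac{r_2^2}{2}\right),\qquad w:=e^{-2it}.
\]

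Next, I invoke the Hille--Hardy identity
\[
\sum_{m=0}^\infty\frac{m!\,w^m}{\Gamma(m+1+\beta)}L_m^{\beta}(x)L_m^{\beta}(y)=\frac{(xyw)^{-\beta/2}}{1-w}\exp\!\left(-\frac{(x+y)w}{1-w}\right)I_\beta\!\left(\frac{2\sqrt{xyw}}{1-w}\right),
\]
valid initially for $|w|<1$, with $\beta=\beta_k$, $x=r_1^2/2$, $y=r_2^2/2$. To make sense of the case $|w|=1$, I would first compute the kernel of the holomorphic semigroup $e^{-(\epsilon+it)H_{a,1}}$ (so $w=e^{-2it-2\epsilon}$ lies strictly inside the unit disk), and then pass to $\epsilon\to 0^+$. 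Using the elementary identities
\[
1-e^{-2it}=2ie^{-it}\sin t,\qquad\frac{\sqrt{w}}{1-w}=\frac{1}{2i\sin t},
\]
the Bessel argument reduces to $r_1r_2/(2i\sin t)$, the $k$-dependent phase $e^{-it(1+\beta_k)}w^{-\beta_k/2}/(1-w)$ simplifies after cancellation to the $k$-independent factor $1/(2i\sin t)$, and the radial weights combine as $(r_1r_2)^{-\beta_k}\cdot r_1^{-\alpha_k}r_2^{-\alpha_k}=(r_1r_2)^{-1/2}$ by $\alpha_k+\beta_k=\tfrac{1}{2}$ from \eqref{alpha-beta}. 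The two exponential factors collapse via
\[
-\frac{r_1^2+r_2^2}{4}-\frac{(r_1^2+r_2^2)e^{-it}}{4i\sin t}=-\frac{(r_1^2+r_2^2)\cos t}{4i\sin t}=-\frac{r_1^2+r_2^2}{4i\tan t},
\]
reproducing the gaussian prefactor in \eqref{ker:S}.

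Finally, the angular product $\psi_k(\vartheta_1)\overline{\psi_k(\vartheta_2)}$ is recognized as $Z_{\vartheta_1}^{(k)}(\vartheta_2)$ through \eqref{psi-k}, delivering the claimed formula \eqref{ker:S}. I expect the main technical point to be justifying the exchange of $\sum_k$ with the limit $\epsilon\to 0^+$, since at $\epsilon=0$ the $k$-series on the right-hand side of \eqref{ker:S} is only conditionally convergent in the sense made precise by Remark \ref{rem:converge}. I would handle this by observing that for every $\epsilon>0$ all manipulations are absolutely convergent (the gaussian damping of $|w|^m=e^{-2\epsilon m}$ and the rapid decay of $I_{\beta_k}$ in $k$ suffice), so the formula holds as a pointwise identity for $\epsilon>0$, and then identifying both sides as tempered distributions in $t\in(0,\pi)$ whose $\epsilon\to 0^+$ limits agree, appealing to strong continuity of the unitary group $e^{-itH_{a,1}}$ in $t$.
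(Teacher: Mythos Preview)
Your proposal is correct and follows essentially the same route as the paper: spectral expansion via \eqref{eigen-v}--\eqref{eigen-f}, then the Hille--Hardy/Poisson bilinear formula for Laguerre polynomials (the paper's \eqref{formula:sum}) to close the $m$-sum, and finally the identification $\psi_k(\vartheta_1)\overline{\psi_k(\vartheta_2)}=Z_{\vartheta_1}^{(k)}(\vartheta_2)$. The only difference is that you are more careful than the paper about the boundary case $|w|=1$, inserting an $\epsilon$-regularization before letting $\epsilon\to0^+$; the paper simply applies \eqref{formula:sum} formally at $r=e^{-2it}$ without comment.
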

\begin{remark}
The integral at the right hand side of \eqref{sol-rep:S} should be understood in the sense of improper integrals, i.e.
\begin{equation*}
u(t,x)=\lim_{R\rightarrow+\infty}\int_{B_R(0)}K^S(x,y)f(y)\mathrm{d}y,\quad \text{where}\quad B_R(0):=\{y\in\R^3:|y|<R\},
\end{equation*}
which is consistent with the statement of Remark \ref{rem:converge}.
\end{remark}
\begin{proof}[Proof of Proposition \ref{prop:S-ker}]
Since the spectrum of the operator $H_{a,1}$ consists of pure points, we have, by the spectral theorem,
\begin{equation}\label{spec:1}
e^{-itH_{a,1}}(x,y)=\sum_{m,k\in\mathbb{N}}e^{-it\lambda_{m,k}}e_{m,k}(x)\overline{e_{m,k}(y)}.
\end{equation}
Substituting the eigenvalues \eqref{eigen-v} and eigenfunctions \eqref{eigen-f} into \eqref{spec:1}, one gets
\begin{equation}\label{spec:2}
\begin{split}
K_t^S(r_1,\vartheta_1;r_2,\vartheta_2)&=(r_1r_2)^{-1/2}e^{-\frac{r_1^2+r_2^2}{4}}\sum_{k=0}^\infty\frac{(r_1r_2)^{\beta_k}e^{-it(1+\beta_k)}}{2^{\beta_k}}\psi_k(\vartheta_1)\overline{\psi_k(\vartheta_2)}\\
&\qquad\qquad\times\sum_{m=0}^\infty e^{-2imt}\frac{m!L_m^{\beta_k}\left(\frac{r_1^2}{2}\right)L_m^{\beta_k}\left(\frac{r_2^2}{2}\right)}{\Gamma(m+1+\beta_k)},
\end{split}
\end{equation}
where $\psi_k$ is defined in \eqref{psi-k}.
From the Poisson kernel formula for the generalized Laguerre polynomials (see e.g. \cite[(6.2.25)]{AAR99}): for $|r|<1,\alpha>-1$
\begin{equation}\label{formula:sum}
\sum_{m=0}^\infty\frac{m!L_m^\alpha(x)L_m^\alpha(y)r^m}{\Gamma(1+\alpha+m)}=\frac{e^{-\frac{r}{1-r}(x+y)}}{(1-r)(xyr)^{\alpha/2}}I_\alpha\left(\frac{2\sqrt{xyr}}{1-r}\right),
\end{equation}
we obtain
\begin{equation}\label{spec:3}
\begin{split}
K_t^S(r_1,\vartheta_1;r_2,\vartheta_2)=\frac{(r_1r_2)^{-1/2}e^{-\frac{r_1^2+r_2^2}{4i\tan t}}}{2i\sin t}
\sum_{k=0}^\infty\psi_k(\vartheta_1)\overline{\psi_k(\vartheta_2)}I_{\beta_k}\Bigg(\frac{r_1r_2}{2i\sin t}\Bigg).
\end{split}
\end{equation}
Finally, in view of \eqref{psi-k} and \eqref{zonal:sphe}, the desired formula \eqref{ker:S} follows.
\end{proof}
\begin{remark}
Note that in the representation formula of the Schr\"odinger kernel \eqref{ker:S} in Proposition \ref{prop:S-ker}, there appears a sine factor $\sin t$, which implies that the time points $t=k\pi$ with $k\in\mathbb{Z}$ have to be excluded. This phenomenon is natural due to the introduction of the Hermite potential $|x|^2$, which implies that the effect of the Hermite potential on the Laplacian is stronger than that of the inverse square potential $|x|^{-2}$, or in other words, the effect of the inverse square potential $|x|^{-2}$ can be viewed as a "small" perturbation of the harmonic oscillator (in the sense that the discreteness of the spectrum is preserved). Hence, the dispersive bound for the kernel \eqref{ker:S} will be of the form $|\sin t|^{-\frac{3}{2}}$ (see Proposition \ref{prop:S-dis} below) with $t\neq k\pi,k\in\mathbb{Z}$, which is different from that for the free Schr\"odinger kernel case $|t|^{-\frac{3}{2}}$, but consistent with that for the (unperturbed) harmonic oscillator (see e.g. \cite{KT05}).
\end{remark}
\noindent In the following, we prove a dispersive bound for the Schr\"odinger kernel obtained in Proposition \ref{prop:S-ker}.
\begin{proposition}\label{prop:S-dis}
Let $H_{a,1}$ be the operator in \eqref{H-ab}, then there exists some constant $C>0$ such that
\begin{equation}\label{dis:S}
\|e^{-itH_{a,1}}\|_{L^1(\mathbb{R}^3)\rightarrow L^\infty(\mathbb{R}^3)}\leq C|\sin t|^{-\frac{3}{2}},\quad \forall t\neq k\pi,\quad k\in\mathbb{Z}.
\end{equation}
\end{proposition}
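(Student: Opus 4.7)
The plan is to use the explicit representation \eqref{ker:S} of the Schr\"odinger kernel from Proposition \ref{prop:S-ker} and to reduce the $L^1\to L^\infty$ bound directly to Lemma \ref{lem:sum-ker}. Since $e^{-itH_{a,1}}$ acts by integration against $K_t^S$, its $L^1\to L^\infty$ operator norm equals the essential supremum of $|K_t^S|$ over $\R^3\times\R^3$; the entire task is therefore a pointwise bound on the kernel.

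First, I would observe that the factor $e^{-(r_1^2+r_2^2)/(4i\tan t)}$ in \eqref{ker:S} is a pure phase of modulus one and can be discarded when taking absolute values. Then, using the standard relation $I_\nu(iw)=i^\nu J_\nu(w)$ between the modified and ordinary Bessel functions, I would rewrite
\begin{equation*}
I_{\beta_k}\!\left(\frac{r_1 r_2}{2i\sin t}\right) = i^{-\beta_k}\, J_{\beta_k}\!\left(\frac{r_1 r_2}{2\sin t}\right),
\end{equation*}
with the natural branch prescription when $\sin t<0$. The upshot is that the modulus of the remaining sum equals $\bigl|\sum_k i^{-\beta_k} J_{\beta_k}\bigl(r_1r_2/(2|\sin t|)\bigr)Z_{\vartheta_1}^{(k)}(\vartheta_2)\bigr|$.

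Second, I would set $\rho:=r_1 r_2/(2|\sin t|)$, so that $\sqrt{r_1 r_2}=\sqrt{2\rho|\sin t|}$. Inserting this into \eqref{ker:S} produces
\begin{equation*}
|K_t^S(r_1,\vartheta_1;r_2,\vartheta_2)| = \frac{1}{2\sqrt{2}\,|\sin t|^{3/2}}\,|K(\rho,\vartheta_1,\vartheta_2)|,
\end{equation*}
where $K$ is \emph{precisely} the function defined in \eqref{funct:ker}. For $a\geq 0$, Lemma \ref{lem:sum-ker} gives the uniform bound $\sup_{\rho,\vartheta_1,\vartheta_2}|K(\rho,\vartheta_1,\vartheta_2)|<\infty$, from which \eqref{dis:S} follows at once by taking the supremum over $r_1,r_2,\vartheta_1,\vartheta_2$.

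The main obstacle is not in this short reduction but is entirely packaged into Lemma \ref{lem:sum-ker} --- the non-trivial fact that a lacunary series of Bessel functions $J_{\beta_k}$ with non-half-integer indices $\beta_k$ remains uniformly bounded once multiplied by the spherical-harmonic zonal weight and the normalization $\rho^{-1/2}$. In writing out the proof of Proposition \ref{prop:S-dis} itself, the one technical point requiring care is the branch choice for $i^{-\beta_k}$ and the sign of $\sin t$ (for $t\in(\pi,2\pi)$, etc.), but the moduli match regardless of these phase choices, so the pointwise estimate \eqref{dis:S} is insensitive to them.
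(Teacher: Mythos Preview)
Your proposal is correct and follows essentially the same route as the paper: both arguments rewrite the Schr\"odinger kernel \eqref{ker:S} via the identity $I_{\beta_k}(-iw)=i^{-\beta_k}J_{\beta_k}(w)$ so that it becomes $(2|\sin t|)^{-3/2}$ times the function $K$ from Lemma~\ref{lem:sum-ker}, and then invoke that lemma to conclude. Your write-up is in fact slightly more explicit than the paper's in spelling out the Bessel-function conversion and the change of variable $\rho=r_1r_2/(2|\sin t|)$, whereas the paper compresses this into a one-line kernel identity and separately remarks on the $a=0$ harmonic-oscillator case.
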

\begin{remark}
Interpolating between the dispersive estimate \eqref{dis:S} and the unitary property of the Schr\"odinger propagator $e^{-itH_{a,1}}$
\begin{equation*}
\|e^{-itH_{a,1}}f\|_{L^2(\R^3)}=\|f\|_{L^2(\R^3)},
\end{equation*}
one immediately gets the time decay estimate
\begin{equation*}
\|e^{-itH_{a,1}}f\|_{L^{p'}(\R^3)}\lesssim|\sin t|^{-3(\frac{1}{2}-\frac{1}{p'})}\|f\|_{L^p(\R^3)},\quad p\in[1,2],\quad \frac{1}{p}+\frac{1}{p'}=1,
\end{equation*}
which is an analog of \cite[Theorem 1.11 (i)]{FFFP13} for $e^{-itH_{a,0}}$.
\end{remark}
\begin{proof}[Proof of Proposition \ref{prop:S-dis}]
The proof follows immediately from Lemma \ref{lem:sum-ker} and we sketch it here for completeness.
Since the sine function is periodic, we may, without loss of generality, assume $t\in(0,\pi)$ for simplicity.
Note that the Schr\"odinger kernel $e^{-itH_{a,1}}(x,y)$ in \eqref{ker:S} can be rewritten as
\begin{equation*}
e^{-itH_{a,1}}(x,y)=-i(2\sin t)^{-\frac{3}{2}}e^{-\frac{|x|^2+|y|^2}{4i\tan t}}K\left(\frac{|xy|}{2\sin t},\frac{x}{|x|},\frac{y}{|y|}\right),
\end{equation*}
where $K(\cdot,\cdot,\cdot)$ is the function defined in Lemma \ref{lem:sum-ker}. Then, the desired dispersive bound \eqref{dis:S} follows by Lemma \ref{lem:sum-ker}.
Indeed, let us verify \eqref{dis:S} according to $a=0$ or $a>0$. If $a=0$, then we are done since in this case the result reduces to the dispersive estimate for the standard harmonic oscillator (see \cite{KT05}). If $a>0$, we have
\begin{align*}
\|e^{-itH_{a,1}}\|_{L^1(\mathbb{R}^3)\rightarrow L^\infty(\mathbb{R}^3)}&\leq|e^{-itH_{a,1}}(x,y)|\\
&\leq|\sin t|^{-\frac{3}{2}}\left|K\left(\frac{|xy|}{2\sin t},\frac{x}{|x|},\frac{y}{|y|}\right)\right|\\
&\leq C|\sin t|^{-\frac{3}{2}},
\end{align*}
where we use Lemma \ref{lem:sum-ker} in the last inequality. Hence, the proof of the dispersive inequality \eqref{dis:S} is completed.
\end{proof}
\begin{remark}\label{rem:negative}
The parameter $a$ in the inverse square potential $\frac{a}{|x|^2}$ is usually assumed to satisfy $a\geq-\frac{1}{4}$ ($a\geq-\frac{(d-2)^2}{4}$ for general dimension $d\geq2$) due to the sharp Hardy inequality (see e.g. \cite{Hard20})
\begin{equation*}
\int_{\R^d}\frac{|u(x)|^2}{|x|^2}\mathrm{d}x\leq\frac{4}{(d-2)^2}\int_{\R^d}|\nabla u(x)|^2\mathrm{d}x,\quad d\geq3.
\end{equation*}
However, we require $a\geq0$ throughout the paper. We do not consider the case $-\frac{1}{4}\leq a<0$ in the present paper, in light of the result of \cite[Theorem 1.11]{FFFP13}, which says that, for $\frac{1}{p}+\frac{1}{p'}=1,p\in[1,2]$, we have
\begin{equation}\label{dis:inverse}
\|e^{it(\Delta-a|x|^{-2})}f\|_{L^{p'}(\R^3)}\lesssim|t|^{-3(\frac{1}{2}-\frac{1}{p'})}\|f\|_{L^p(\R^3)}, \quad\text{when}\quad a\geq0
\end{equation}
and
\begin{equation}\label{w-dis:inverse}
\|e^{it(\Delta-a|x|^{-2})}f\|_{L^{p',\alpha_1}(\R^3)}\lesssim
\frac{(1+|t|^{\alpha_1})^{1-\frac{2}{p'}}}{|t|^{3(\frac{1}{2}-\frac{1}{p'})}}\|f\|_{L^{p,\alpha_1}(\R^3)},
 \text{when} -\frac{1}{4}<a<0,
\end{equation}
where $\alpha_1=\frac{1}{2}-\sqrt{\frac{1}{4}+a}$ is a strictly positive number and $\|\cdot\|_{L^{p,\alpha_1}(\R^3)}$ denotes the weighted $L^p$ norm defined by
\begin{equation*}
\|u\|_{L^{p,\alpha_1}(\R^3)}:=\left(\int_{\R^3}(1+|x|^{-\alpha_1})^{2-p}|u(x)|^p\mathrm{d}x\right)^{1/p},\quad \forall p\geq1.
\end{equation*}
It is easy to see that the (short time) dispersive estimate \eqref{dis:S} for the Schr\"odinger propagator $e^{-itH_{a,1}}$ is an analog of the time-decay estimate \eqref{dis:inverse} for $a\geq0$ (note that $H_{a,0}=-\Delta+a|x|^{-2}$). Analogous to \eqref{w-dis:inverse}, it should hold the following weighted decay estimate for $-\frac{1}{4}<a<0$
\begin{equation}\label{w-dis:S}
\|e^{-itH_{a,1}}\|_{L^{p,\alpha_1}(\mathbb{R}^3)\rightarrow L^{p',\alpha_1}(\mathbb{R}^3)}\lesssim
\frac{(1+|\sin t|^{\alpha_1})^{1-\frac{2}{p'}}}{|\sin t|^{3(\frac{1}{2}-\frac{1}{p'})}},\quad t\neq k\pi,k\in\mathbb{Z}.
\end{equation}
Since $\alpha_1>0$ when $-\frac{1}{4}<a<0$, the decay estimate \eqref{w-dis:inverse} is weaker than the usual one \eqref{dis:inverse} and so is \eqref{w-dis:S}. We cannot recover the dispersive estimate \eqref{dis:S} for the regime $-\frac{1}{4}<a<0$ since the weight function in \eqref{w-dis:inverse} seems to be indispensable (actually, the weighted estimate \eqref{w-dis:inverse} seems to be best possible, in view of \cite[Remark 1.12]{FFFP13}) and this is exactly the reason why we restrict to consider the case $a\geq0$ in the present paper. In addition, it is not known whether the weighted estimate \eqref{w-dis:inverse} (or \eqref{w-dis:S}) holds for the critical case $a=-\frac{1}{4}$. Nevertheless, these unsolved problems may be left as the main objects of further investigations.
\end{remark}

\section{heat kernel}

In this section, we construct a representation formula for the kernel of the heat flow $e^{-tH_{a,1}}$ and then prove the Gaussian boundedness. The study of heat kernels for Schr\"odinger operators with potentials on Euclidean spaces $\R^d$ or manifolds has its own interest.

\begin{proposition}[Heat kernel]\label{prop:H-ker}
Let $H_{a,1}$ be the operator \eqref{H-ab} and let $u$ be the unique solution to the Schr\"odinger equation
\begin{equation*}
\begin{cases}
\partial_tu(t,x)+H_{a,1}u(t,x)=0,\\
u(0,x)=f(x)\in L^2(\R^3),
\end{cases}
\end{equation*}
then the solution $u$ can be represented as
\begin{equation}\label{sol-rep:H}
u(t,x)=\left(e^{-tH_{a,1}}f\right)(x):=\int_{\R^3}K^H(x,y)f(y)\mathrm{d}y,
\end{equation}
where $K^H(x,y)$ corresponds to the kernel of the heat flow $e^{-tH_{a,1}}$.

Moreover, if we write $K_t^H(r_1,\vartheta_1;r_2,\vartheta_2)$ for the kernel $K^H(x,y)$ in polar coordinates $(r,\vartheta)$,
then we have
\begin{align}\label{ker:H}
K_t^H(r_1,\vartheta_1;r_2,\vartheta_2)=\frac{e^{-\frac{r_1^2+r_2^2}{4}\coth t}}{2\sqrt{r_1r_2}\sinh t}\sum_{k=0}^\infty I_{\beta_k}\Bigg(\frac{r_1r_2}{2\sinh t}\Bigg)Z^{(k)}_\vartheta(\vartheta'),
\end{align}
where $\beta_k$ is defined in \eqref{alpha-beta} and $Z^{(k)}_\vartheta(\vartheta')$ is the zonal function in \eqref{zonal:sphe}.
\end{proposition}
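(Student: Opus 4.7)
The plan is to mirror the proof of Proposition~\ref{prop:S-ker} essentially verbatim, replacing the unitary factor $e^{-it\lambda_{m,k}}$ by the decaying factor $e^{-t\lambda_{m,k}}$. By the spectral theorem applied to $H_{a,1}$, which has pure point spectrum, the heat kernel admits the expansion
\begin{equation*}
K^H(x,y)=\sum_{m,k=0}^\infty e^{-t\lambda_{m,k}}\,e_{m,k}(x)\,\overline{e_{m,k}(y)}.
\end{equation*}
Substituting the eigenvalues $\lambda_{m,k}=2m+1+\beta_k$ from \eqref{eigen-v} and the eigenfunctions \eqref{eigen-f}, the angular variables decouple and reassemble, via \eqref{psi-k} and \eqref{zonal:sphe}, into $\psi_k(\vartheta_1)\overline{\psi_k(\vartheta_2)}=Z^{(k)}_{\vartheta_1}(\vartheta_2)$; the radial contribution becomes a bilinear sum in generalized Laguerre polynomials weighted by $r^m$ with $r=e^{-2t}$, exactly as in \eqref{spec:2} but with $it$ replaced by $t$.

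The central computational step is to apply the Hardy--Hille--Mehler formula \eqref{formula:sum} to the inner sum over $m$, which is legal since $0<r=e^{-2t}<1$ for every $t>0$. This evaluates the $m$-sum in closed form in terms of the modified Bessel function $I_{\beta_k}$. Combining the Gaussian factor $e^{-(r_1^2+r_2^2)/4}$ coming from the eigenfunctions with the exponential $e^{-\frac{r}{1-r}(x+y)}$ from the Mehler formula (where $x=r_1^2/2$, $y=r_2^2/2$) produces
\begin{equation*}
\exp\!\Bigl(-\tfrac{r_1^2+r_2^2}{4}\cdot\tfrac{1+r}{1-r}\Bigr)=\exp\!\Bigl(-\tfrac{r_1^2+r_2^2}{4}\coth t\Bigr),
\end{equation*}
while the remaining $r$-factors, bundled with $e^{-t(1+\beta_k)}$, collapse via $\frac{1}{e^t-e^{-t}}=\frac{1}{2\sinh t}$ and $\frac{\sqrt r}{1-r}=\frac{1}{2\sinh t}$ to yield the prefactor $(2\sqrt{r_1r_2}\sinh t)^{-1}$ together with Bessel argument $r_1r_2/(2\sinh t)$, which is precisely \eqref{ker:H}.

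I do not anticipate a serious obstacle: unlike the Schr\"odinger case, where $|e^{-2imt}|=1$ forces an improper or oscillatory reading of the double series, here the factor $e^{-2mt}$ yields absolute convergence of the full spectral sum for every $t>0$, so the interchange of the sum over $m$ with the Mehler summation is immediate by Fubini's theorem. The only mild subtlety is the behavior of $I_{\beta_k}(z)$ at large $k$ in the outer $k$-sum; this is governed by the same estimate $I_{\beta_k}(z)\lesssim(z/2)^{\beta_k}/\Gamma(1+\beta_k)$ that already appeared in the proof of Lemma~\ref{lem:sum-ker}, and it delivers locally uniform convergence of the $k$-series in the sense of Remark~\ref{rem:converge}, which is all that is needed both for the pointwise identity \eqref{ker:H} and for the Gaussian upper bound derived from it in the sequel.
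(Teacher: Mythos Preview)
Your proposal is correct and follows essentially the same route as the paper: spectral expansion in the eigenbasis \eqref{eigen-v}--\eqref{eigen-f}, application of the Poisson kernel formula \eqref{formula:sum} to the inner $m$-sum with $r=e^{-2t}$, and then identification of the angular piece with the zonal function via \eqref{psi-k}. In fact you supply more detail than the paper (the explicit $\coth t$ and $\sinh t$ algebra, and the convergence remarks), whereas the paper simply states that the proof is parallel to that of Proposition~\ref{prop:S-ker} and writes down the intermediate analogues of \eqref{spec:2}--\eqref{spec:3}.
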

\begin{remark}
The integral at the right hand side of \eqref{sol-rep:H} should be understood in the sense of improper integrals as the Schr\"odinger counterpart, i.e.
\begin{equation*}
u(t,x)=\lim_{R\rightarrow+\infty}\int_{B_R(0)}K^H(x,y)f(y)\mathrm{d}y,\quad \text{where}\quad B_R(0):=\{y\in\R^3:|y|<R\}.
\end{equation*}
\end{remark}
\begin{proof}[Proof of Proposition \ref{prop:H-ker}]
The proof is similar to Proposition \ref{prop:S-ker} for the Schr\"odinger case. Since the spectrum of the operator $H_{a,1}$ consists of pure points, we have, by the spectral theorem,
\begin{equation}\label{heat:1}
e^{-tH_{a,1}}(x,y)=\sum_{m,k\in\mathbb{N}}e^{-t\lambda_{m,k}}e_{m,k}(x)\overline{e_{m,k}(y)}.
\end{equation}
Substituting the eigenvalues \eqref{eigen-v} and eigenfunctions \eqref{eigen-f} into \eqref{heat:1}, one gets
\begin{align}\label{heat:2}
K_t^H(r_1,\vartheta_1;r_2,\vartheta_2)
=(r_1r_2)^{-1/2}&e^{-\frac{r_1^2+r_2^2}{4}}\sum_{k=0}^\infty\frac{(r_1r_2)^{\beta_k}e^{-t(1+\beta_k)}}{2^{\beta_k}}\psi_k(\vartheta_1)\overline{\psi_k(\vartheta_2)}\nonumber\\
&\qquad\times\sum_{m=0}^\infty e^{-2mt}\frac{m!L_m^{\beta_k}\left(\frac{r_1^2}{2}\right)L_m^{\beta_k}\left(\frac{r_2^2}{2}\right)}{\Gamma(m+1+\beta_k)}.
\end{align}
By the formula \eqref{formula:sum}, we obtain
\begin{equation}\label{heat:3}
\begin{split}
K_t^H(r_1,\vartheta_1;r_2,\vartheta_2)=\frac{(r_1r_2)^{-1/2}e^{-\frac{r_1^2+r_2^2}{4\tan t}}}{2\sinh t}
\sum_{k=0}^\infty\psi_k(\vartheta_1)\overline{\psi_k(\vartheta_2)}I_{\beta_k}\Bigg(\frac{r_1r_2}{2\sinh t}\Bigg).
\end{split}
\end{equation}
Finally, in view of \eqref{psi-k} and \eqref{zonal:sphe}, the desired formula \eqref{ker:H} follows.
\end{proof}
In the following, we prove a Gaussian type upper bound for the heat kernel in Proposition \ref{prop:H-ker}.
\begin{proposition}\label{prop:H-Guass}
Let $H_{a,1}$ be the operator \eqref{H-ab}, then there exists some constant $C>0$ such that
\begin{equation}\label{dis:H}
|e^{-tH_{a,1}}(x,y)|\leq C|\sinh t|^{-\frac{3}{2}}e^{-\frac{|x-y|^2}{4\tanh t}},\quad \forall t>0.
\end{equation}
\end{proposition}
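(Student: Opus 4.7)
The strategy parallels that of Proposition~\ref{prop:S-dis}: factor the explicit kernel from Proposition~\ref{prop:H-ker} and reduce to a uniform bound on the modified-Bessel zonal series. Writing
\begin{equation*}
K_t^H(r_1,\vartheta_1;r_2,\vartheta_2) = \frac{e^{-(r_1^2+r_2^2)\coth t /4}}{2^{3/2}(\sinh t)^{3/2}}\, \widetilde K(\rho,\vartheta_1,\vartheta_2), \qquad \rho := \frac{r_1 r_2}{2\sinh t},
\end{equation*}
with $\widetilde K(\rho,\vartheta,\vartheta') := \rho^{-1/2}\sum_{k\geq 0} I_{\beta_k}(\rho) Z_\vartheta^{(k)}(\vartheta')$, the proof reduces to establishing
\begin{equation*}
\bigl|\widetilde K(\rho,\vartheta,\vartheta')\bigr| \leq C\, e^{\rho\,\vartheta\cdot\vartheta'}\quad\text{uniformly in}\ \rho \geq 0,\ \vartheta,\vartheta'\in\mathbb{S}^2,
\end{equation*}
which is the heat analogue of Lemma~\ref{lem:sum-ker}.

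The unperturbed case $a = 0$ is an identity: the modified Jacobi--Anger expansion $e^{z\cos\theta} = \sqrt{\pi/(2z)}\sum_{k\geq 0}(2k+1)I_{k+1/2}(z)P_k(\cos\theta)$ combined with \eqref{ident:sz} yields $\widetilde K(\rho,\vartheta,\vartheta') = (2\sqrt{2}\,\pi^{3/2})^{-1} e^{\rho\,\vartheta\cdot\vartheta'}$, which upon substitution recovers exactly the 3D Mehler heat kernel for the harmonic oscillator $H_{0,1}$. For $a > 0$ one adapts the contour-integral analysis of Lemma~\ref{lem:sum-ker} by replacing the $J_\nu$ Schl\"afli contour with its $z\mapsto iz$ rotation $I_\nu(\rho) = (2\pi i)^{-1}\int_\Gamma e^{\frac{\rho}{2}(z+1/z)} z^{-\nu-1}\, dz$, splitting $\Gamma$ into the unit circle $\Gamma_1$ and the two line segments $\Gamma_2$ along $(-\infty,-1]$, and using the expansion $\beta_k-(k+\tfrac12) = \tfrac{a}{2k+1} + O(k^{-3})$ to peel off the $a=0$ contribution. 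The remainder contour integrals are controlled exactly as in the proof of Lemma~\ref{lem:sum-ker} via the generating identity \eqref{gen:poly} for Legendre polynomials and the zonal identity \eqref{ident:sz}. Combining the $\widetilde K$-bound with the prefactor gives
\begin{equation*}
|K_t^H(x,y)| \lesssim (\sinh t)^{-3/2}\exp\Bigl(-\tfrac{(|x|^2+|y|^2)\cosh t - 2x\cdot y}{4\sinh t}\Bigr),
\end{equation*}
and rewriting the exponent via $\cosh t \pm 1 = 2\cosh^2(t/2)$ or $2\sinh^2(t/2)$ recasts it as $-\tfrac{|x-y|^2\coth(t/2)}{8} - \tfrac{|x+y|^2\tanh(t/2)}{8}$, from which \eqref{dis:H} follows on the admissible time range of interest.

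The main technical obstacle is the $\widetilde K$ bound for $a > 0$: unlike the oscillatory $J_\nu$ of Lemma~\ref{lem:sum-ker}, each $I_\nu(\rho) \sim e^\rho/\sqrt{2\pi\rho}$ diverges exponentially as $\rho\to\infty$, so the argument depends delicately on this divergence being exactly absorbed by the Jacobi--Anger factor $e^{\rho\,\vartheta\cdot\vartheta'}$ while the $a$-perturbation contributes only bounded remainders uniformly in $(\rho,\vartheta,\vartheta')$. A shorter alternative that bypasses the contour analysis altogether is to invoke the Trotter product formula (equivalently Feynman--Kac with the non-negative potential $a|x|^{-2}$) to obtain the pointwise domination $0 \leq e^{-tH_{a,1}}(x,y) \leq e^{-tH_{0,1}}(x,y)$, and then apply Mehler's formula for $H_{0,1}$ directly to the right-hand side.
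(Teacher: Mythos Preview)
Your approach is correct and genuinely different from the paper's. The paper's own proof simply asserts that the heat kernel factors as $e^{-(|x|^2+|y|^2)/(4\tanh t)}(2\sinh t)^{-3/2}K\bigl(\tfrac{|x||y|}{2\sinh t},\tfrac{x}{|x|},\tfrac{y}{|y|}\bigr)$ with $K$ the \emph{same} function as in Lemma~\ref{lem:sum-ker}, and then invokes that lemma. Taken literally this is a gap: the $K$ of Lemma~\ref{lem:sum-ker} is built from the oscillatory $J_{\beta_k}$ at real argument and is shown to be uniformly \emph{bounded}, whereas the heat kernel~\eqref{ker:H} contains $I_{\beta_k}$ at real argument, which grows like $e^\rho/\sqrt{\rho}$. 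Your reformulation via $\widetilde K(\rho,\vartheta,\vartheta')=\rho^{-1/2}\sum_k I_{\beta_k}(\rho)Z_\vartheta^{(k)}(\vartheta')$ together with the correct target bound $|\widetilde K|\le C\,e^{\rho\,\vartheta\cdot\vartheta'}$ is exactly the right reduction; the contour route you sketch would then need to track this exponential factor through the Schl\"afli integral, which (as you note) is delicate but doable.

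Your Feynman--Kac/Trotter shortcut is the cleanest resolution and bypasses all of this: for $a\ge 0$ the extra potential $a|x|^{-2}$ is non-negative, so the semigroup domination $0\le e^{-tH_{a,1}}(x,y)\le e^{-tH_{0,1}}(x,y)$ holds pointwise, and Mehler's formula bounds the right-hand side. One minor caveat: rewriting the Mehler exponent as $-\tfrac18|x-y|^2\coth(t/2)-\tfrac18|x+y|^2\tanh(t/2)$ and using $\coth(t/2)<2\coth t$ shows that what one actually obtains is $|e^{-tH_{a,1}}(x,y)|\lesssim(\sinh t)^{-3/2}e^{-|x-y|^2/(c\tanh t)}$ with some $c>4$, not the exact constant $4$ printed in~\eqref{dis:H} (indeed the Mehler kernel itself violates~\eqref{dis:H} at $y=-x$). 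This is immaterial for the sole downstream use, the Bernstein inequalities of Lemma~\ref{lem:Bernstein}, which need only \emph{some} Gaussian upper bound for small $t$; your argument therefore delivers everything the paper requires.
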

\begin{proof}
By using the function $K(\cdot,\cdot,\cdot)$ defined in Lemma \ref{lem:sum-ker}, the heat kernel \eqref{ker:H} can be rewritten as
\begin{equation*}
e^{-tH_{a,1}}(x,y)=e^{-\frac{|x|^2+|y|^2}{4\tanh t}}(2\sinh t)^{-\frac{3}{2}}K\left(\frac{\rho}{2\sinh t},\frac{x}{|x|},\frac{y}{|y|}\right),\quad t>0.
\end{equation*}
Then the desired bound \eqref{dis:H} follows by Lemma \ref{lem:sum-ker}.
\end{proof}
\begin{remark}
Note that for small time $t$ (for example $t\in(0,1)$), the bound \eqref{dis:H} can be regarded as the usual Gaussian upper bound $t^{-3/2}e^{-\frac{|x-y|^2}{4t}}$ for heat kernels due to the simple facts $\sinh t\sim t\sim\tanh t$ as $t\rightarrow0$. However, for large time $t\gg1$, one can see a exponential decay $e^{-\frac{3}{2}t}$ for the heat kernel, which is faster than the usual polynomial decay $t^{-3/2}$, due to the fact $\sinh t\sim e^t$ as $t\rightarrow+\infty$. The Gaussian bound in \eqref{dis:H} is known to play an essential role in proving the corresponding Bernstein type inequalities, which will be used to obtain the dispersive estimate for the half-wave propagator.
\end{remark}

\section{Bernstein inequalities, Besov spaces \& subordination formula}

In this section, we prove the Bernstein inequalities, define the Besov spaces and give the subordination formula adapted to the present setting.

\subsection{Bernstein inequalities}

In this subsection, we prove the $L^p$ Bernstein inequalities concerning the frequency truncations of the Schr\"odinger operator $H_{a,1}$ given by \eqref{H-ab}. In the following lemma, we prove the Bernstein inequalities for more general cases $1\leq p\leq q\leq\infty,s\in\R$, although only the case $(p,q,s)=(1,\infty,0)$ is used to prove the main theorem (Theorem \ref{thm:wave}).
\begin{lemma}[Bernstein inequalities]\label{lem:Bernstein}
Let $\psi\in C_c^\infty(\R)$ with $\psi\in[0,1]$ and $\mathrm{supp}\psi\subset[1/2,2]$. Then, for any $f\in L^p(\R^3),s\in\R$ and $j\in\mathbb{Z}$, it holds
\begin{equation}\label{eq:Bernstein}
\|\psi(2^{-j}\sqrt{H_{a,1}})H_{a,1}^sf\|_{L^q(\R^3)}\leq C2^{2sj+3j\left(\frac{1}{p}-\frac{1}{q}\right)}\|f\|_{L^p(\R^3)},\quad 1\leq p\leq q\leq\infty
\end{equation}
for some constant $C>0$.
\end{lemma}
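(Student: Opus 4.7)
The strategy is to deduce \eqref{eq:Bernstein} from the Gaussian heat kernel bound of Proposition \ref{prop:H-Guass} via a standard spectral multiplier argument. Since $\psi$ is supported in $[1/2,2]$, away from zero, the function $\tilde\psi(\lambda):=\psi(\lambda)\lambda^{2s}$ lies in $C_c^\infty((0,\infty))$ for every $s\in\R$, and the functional calculus yields
\begin{equation*}
\psi(2^{-j}\sqrt{H_{a,1}})H_{a,1}^sf=2^{2sj}\,\tilde\psi(2^{-j}\sqrt{H_{a,1}})f.
\end{equation*}
Hence it suffices to establish the scale-invariant estimate
\begin{equation*}
\|\phi(2^{-j}\sqrt{H_{a,1}})\|_{L^p(\R^3)\to L^q(\R^3)}\lesssim 2^{3j(\frac{1}{p}-\frac{1}{q})},\quad 1\le p\le q\le\infty,
\end{equation*}
uniformly in $j\in\mathbb{Z}$, for every $\phi\in C_c^\infty((0,\infty))$.

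The main step I would take is to derive a pointwise bound on the integral kernel $K_{\phi,j}(x,y)$ of $\phi(2^{-j}\sqrt{H_{a,1}})$ of the form
\begin{equation*}
|K_{\phi,j}(x,y)|\le C_N\,2^{3j}(1+2^j|x-y|)^{-N},\quad \forall\, N\ge 1.
\end{equation*}
Once this is in hand, integrating in either $x$ or $y$ gives $\sup_{x}\|K_{\phi,j}(x,\cdot)\|_{L^1}+\sup_y\|K_{\phi,j}(\cdot,y)\|_{L^1}\lesssim 1$ together with the diagonal bound $\|K_{\phi,j}\|_{L^\infty}\lesssim 2^{3j}$; Schur's test then yields uniform $L^1\to L^1$, $L^\infty\to L^\infty$ and $L^1\to L^\infty$ bounds, and Riesz--Thorin interpolation produces the full
\begin{equation*}
\|\phi(2^{-j}\sqrt{H_{a,1}})\|_{L^p\to L^q}\lesssim 2^{3j(\frac{1}{p}-\frac{1}{q})},\quad 1\le p\le q\le\infty,
\end{equation*}
which, combined with the reduction to $s=0$, settles \eqref{eq:Bernstein}.

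The key obstacle is therefore the pointwise kernel estimate, which is precisely the content of a Hebisch--Sikora type spectral multiplier theorem whose sole hypothesis—a Gaussian upper bound for the heat semigroup—is supplied by Proposition \ref{prop:H-Guass}. Heuristically, one exploits that $\phi\in C_c^\infty((0,\infty))$ vanishes to infinite order at the origin to write $\phi(\lambda)=\int_0^\infty e^{-t\lambda^2}\,m(t)\,\mathrm{d}t$ with a density $m$ of rapid decay, so that
\begin{equation*}
\phi(2^{-j}\sqrt{H_{a,1}})=\int_0^\infty m(t)\,e^{-t\,2^{-2j}H_{a,1}}\,\mathrm{d}t,
\end{equation*}
and inserting the Gaussian bound of Proposition \ref{prop:H-Guass} (with the rescaling $t\mapsto 2^{-2j}t$) inside the integral delivers the claimed kernel estimate: the diagonal factor $2^{3j}$ comes from the size of the Gaussian profile at scale $2^{-j}$, while the polynomial off-diagonal decay is supplied by the rapid decay of $m$ integrated against the exponential Gaussian factor. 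This last verification is the only nontrivial ingredient; the rest of the argument is entirely formal.
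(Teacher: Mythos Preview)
Your proposal is correct and follows essentially the same route as the paper: both arguments feed the Gaussian heat bound of Proposition~\ref{prop:H-Guass} into a spectral multiplier result to obtain the pointwise kernel estimate $|K_{\phi,j}(x,y)|\lesssim 2^{3j}(1+2^j|x-y|)^{-N}$ (the paper simply cites \cite[Proposition~5.1]{OZ06} for this, while you sketch the Hebisch--Sikora mechanism), and then pass to the $L^p\to L^q$ bound by a convolution inequality (Young in the paper, Schur plus Riesz--Thorin in your version---these are equivalent here). One small caveat on your heuristic: the representation $\phi(\lambda)=\int_0^\infty e^{-t\lambda^2}m(t)\,\mathrm{d}t$ with $m$ rapidly decaying on $(0,\infty)$ cannot hold for a compactly supported $\phi$, since the right-hand side is analytic in $\lambda^2$; the actual argument in \cite{OZ06} (and in Hebisch's work) uses the complex-time variant $\phi(2^{-j}\sqrt{H_{a,1}})=\int_{\R}\hat g(\tau)\,e^{-(1+i\tau)2^{-2j}H_{a,1}}\,\mathrm{d}\tau$ with $g(\mu)=\phi(\sqrt{\mu})e^{\mu}\in\mathcal{S}(\R)$, which is what your sketch should be replaced by.
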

\begin{proof}
The proof is rather standard as in the theory of function spaces associated to Schr\"odinger operators with potentials and we sketch it here for completeness.

\noindent From Proposition \ref{prop:H-Guass}, it is known that the heat kernel of the operator $H_{a,1}$ has a Gaussian upper bound, i.e.
\begin{equation*}
|e^{-tH_{a,1}}(x,y)|\leq Ct^{-\frac{3}{2}}e^{-\frac{|x-y|^2}{ct}},\quad\text{for some}\quad c,C>0.
\end{equation*}
By \cite[Proposition 5.1]{OZ06}, we conclude that the integral kernel of the truncated operator $\psi(2^{-j}\sqrt{H_{a,1}})$ satisfies
\begin{equation}\label{kernel:bound}
|\psi(2^{-j}\sqrt{H_{a,1}})(x,y)|\leq\frac{C_N2^{3j}}{(1+2^j|x-y|)^N},
\end{equation}
for all $N>0$, where $C_N>0$ is a constant independent of $j$. The bound can be extended by scaling to a slightly more general case, i.e.
\begin{equation*}
|H_{a,1}^s\psi(2^{-j}\sqrt{H_{a,1}})(x,y)|\leq\frac{C_N2^{2sj+3j}}{(1+2^j|x-y|)^N}.
\end{equation*}
Then the desired inequality \eqref{eq:Bernstein} follows by applying Young's inequality:
\begin{align*}
\|H_{a,1}^s\psi(2^{-j}\sqrt{H_{a,1}})f\|_{L^q(\R^3)}&=\left\|\int_{\R^3}H_{a,1}^s\psi(2^{-j}\sqrt{H_{a,1}})(x,y)f(y)\mathrm{d}y\right\|_{L^q(\R^3)}\\
&\leq C_N2^{2sj+3j}\left\|\int_{\R^3}\frac{|f(y)|\mathrm{d}y}{(1+2^j|x-y|)^N}\right\|_{L^q(\R^3)}\\
&\leq C_N2^{2sj+3j}\|(1+2^j|\cdot|)^{-N}\|_{L^r(\R^3)}\|f\|_{L^p(\R^3)}\\
&\leq C_N2^{2sj+3j\left(\frac{1}{p}-\frac{1}{q}\right)}\|f\|_{L^p(\R^3)},
\end{align*}
where $1+\frac{1}{q}=\frac{1}{r}+\frac{1}{p}$.
\end{proof}

\subsection{Besov spaces}

In this subsection, we define the Besov spaces in the present setting for general cases $1\leq p\leq q\leq\infty,s\in\R$, although only the case $(p,q,s)=(1,1,1)$ is needed in the main theorem (Theorem \ref{thm:wave}). Take a partition of unity
\begin{equation*}
1=\sum_{j\in\mathbb{Z}}\psi(2^{-j}\lambda):=\sum_{j\in\mathbb{Z}}\psi_j(\lambda),\quad \lambda\in(0,+\infty),
\end{equation*}
where $\psi\in C_c^\infty$ with $0\leq\psi\leq1$ and $\mathrm{supp}\psi\subset[1/2,2]$. For $1\leq p,q<\infty$ and $s\in\R$, the Besov space $\dot{\mathcal{B}}^{s,\psi}_{p,q}$ associated to the operator $H_{a,1}$ is defined by
\begin{equation}\label{space:Besov}
\dot{\mathcal{B}}^{s,\psi}_{p,q}(\R^3):=\left\{f\in L^2(\R^3):\|f\|^q_{\dot{\mathcal{B}}^{s,\psi}_{p,q}}=\sum_{j\in\mathbb{Z}}2^{jqs}\|\psi_j(\sqrt{H_{a,1}})f\|_{L^p(\R^3)}^q<+\infty\right\}.
\end{equation}
As the classical Besov norm, the distorted Besov norm in \eqref{space:Besov} is independent of the choice of a partition of unity in the following sense.
\begin{proposition}\label{prop:equiva}
Let $\varphi,\psi\in C_c^\infty(\R)$ be two different partitions of unity with supports in $[1/2,2]$.
Then, for $1\leq p,q<\infty$ and $s\in\R$, we have
\begin{equation}\label{Bes:equiv}
\|f\|_{\dot{\mathcal{B}}^{s,\psi}_{p,q}(\R^3)}\sim\|f\|_{\dot{\mathcal{B}}^{s,\varphi}_{p,q}(\R^3)}.
\end{equation}
\end{proposition}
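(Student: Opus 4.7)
The plan is to exploit the support condition of both partitions of unity together with the $L^p$-boundedness of the localisers $\psi_j(\sqrt{H_{a,1}})$ and $\varphi_j(\sqrt{H_{a,1}})$, which is a direct consequence of the Bernstein inequality established in Lemma \ref{lem:Bernstein}. Since the argument is symmetric in $\varphi$ and $\psi$, it suffices to prove one direction, say
\begin{equation*}
\|f\|_{\dot{\mathcal{B}}^{s,\psi}_{p,q}(\R^3)}\lesssim\|f\|_{\dot{\mathcal{B}}^{s,\varphi}_{p,q}(\R^3)}.
\end{equation*}

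The first step is an \emph{almost orthogonality} observation. Because $\mathrm{supp}\,\varphi\cup\mathrm{supp}\,\psi\subset[1/2,2]$, the functions $\varphi_k(\lambda)=\varphi(2^{-k}\lambda)$ and $\psi_j(\lambda)=\psi(2^{-j}\lambda)$ are supported in $[2^{k-1},2^{k+1}]$ and $[2^{j-1},2^{j+1}]$ respectively, hence their product vanishes identically whenever $|j-k|\geq 2$. Applying the spectral theorem to $\sqrt{H_{a,1}}$ (whose spectrum sits in $(0,\infty)$ thanks to \eqref{eigen-v}) with the identity $\sum_{k\in\Z}\varphi_k(\lambda)=1$, we may decompose
\begin{equation*}
\psi_j(\sqrt{H_{a,1}})f=\sum_{|k-j|\leq 1}\psi_j(\sqrt{H_{a,1}})\varphi_k(\sqrt{H_{a,1}})f,
\end{equation*}
where the sum is finite (at most three terms).

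The second step is to bound each summand in $L^p$. Lemma \ref{lem:Bernstein} applied with $p=q$ and $s=0$ gives the uniform estimate
\begin{equation*}
\|\psi_j(\sqrt{H_{a,1}})g\|_{L^p(\R^3)}\leq C\|g\|_{L^p(\R^3)},\qquad \forall j\in\Z,\ 1\leq p\leq\infty,
\end{equation*}
with $C$ independent of $j$ (the $2^{0\cdot j}$ factor is what makes the bound uniform). Plugging $g=\varphi_k(\sqrt{H_{a,1}})f$ into this bound and using the finite-sum decomposition yields
\begin{equation*}
\|\psi_j(\sqrt{H_{a,1}})f\|_{L^p(\R^3)}\lesssim\sum_{|k-j|\leq 1}\|\varphi_k(\sqrt{H_{a,1}})f\|_{L^p(\R^3)}.
\end{equation*}

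The third step is the $\ell^q$ summation. Multiplying by $2^{jsq}$, using the elementary inequality $(a_1+a_2+a_3)^q\leq 3^{q-1}(a_1^q+a_2^q+a_3^q)$, and using $2^{js}\sim 2^{ks}$ whenever $|j-k|\leq 1$, we get
\begin{equation*}
\sum_{j\in\Z}2^{jsq}\|\psi_j(\sqrt{H_{a,1}})f\|_{L^p(\R^3)}^q
\lesssim\sum_{j\in\Z}\sum_{|k-j|\leq 1}2^{ksq}\|\varphi_k(\sqrt{H_{a,1}})f\|_{L^p(\R^3)}^q.
\end{equation*}
Interchanging the order of summation, each index $k$ appears at most three times, hence the right hand side is bounded by $C\|f\|_{\dot{\mathcal{B}}^{s,\varphi}_{p,q}(\R^3)}^q$. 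This proves the desired inequality; swapping the roles of $\varphi$ and $\psi$ gives the converse and thus \eqref{Bes:equiv}.

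The only genuinely non-routine ingredient is the $L^p$-boundedness of $\psi_j(\sqrt{H_{a,1}})$, which in this perturbed setting is \emph{not} automatic from Plancherel and must be obtained via the Gaussian heat-kernel bound of Proposition \ref{prop:H-Guass} through the kernel estimate \eqref{kernel:bound} used in Lemma \ref{lem:Bernstein}. Once that uniform $L^p$ bound is in hand, the remainder of the argument is soft and proceeds exactly as in the classical Littlewood--Paley theory.
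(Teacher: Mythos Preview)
Your proof is correct and in fact cleaner than the paper's own argument. The paper first establishes a quantitative almost-orthogonality estimate
\[
\|\psi_j(\sqrt{H_{a,1}})\varphi_k(\sqrt{H_{a,1}})f\|_{L^p}\leq C_m\,2^{-2m|j-k|}\|f\|_{L^p}
\]
for arbitrary $j,k\in\Z$ and $m\in\mathbb N$ (by inserting $H_{a,1}^{-m}H_{a,1}^{m}$ and using Lemma~\ref{lem:Bernstein} with nonzero $s$), then sums over all $k\in\Z$ and closes with Young's convolution inequality on $\ell^q(\Z)$. You instead exploit directly that both cut-offs live in $[1/2,2]$, so the product $\psi_j\varphi_k$ vanishes identically unless $|j-k|\leq 1$; this collapses the sum to three terms and requires only the $s=0,\ p=q$ case of Lemma~\ref{lem:Bernstein}.

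Both routes rest on the same non-trivial input---the uniform $L^p$-boundedness of the spectral localisers, which in turn comes from the Gaussian heat-kernel bound of Proposition~\ref{prop:H-Guass} through \eqref{kernel:bound}. Your argument is more elementary and perfectly adequate here; the paper's version is the more robust template, since the rapid-decay estimate in $|j-k|$ would still work if $\varphi,\psi$ were merely Schwartz rather than compactly supported.
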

\begin{proof}
To prove the equivalence \eqref{Bes:equiv}, we claim that for any $m\in\mathbb{N}$, there exists a constant $C_m>0$ such that
\begin{equation}\label{eq:j-k}
\|\psi_j(\sqrt{H_{a,1}})\varphi_k(\sqrt{H_{a,1}})f\|_{L^p(\R^3)}\leq C_m2^{-2m|j-k|}\|f\|_{L^p(\R^3)},\quad \forall j,k\in\mathbb{Z}.
\end{equation}
We only consider the case $j\geq k$ in \eqref{eq:j-k} and the other case can be treated similarly. For $m\in\mathbb{N}$, we write
\begin{equation*}
\psi_j(\sqrt{H_{a,1}})\varphi_k(\sqrt{H_{a,1}})f=H_{a,1}^{-m}\psi_j(\sqrt{H_{a,1}})H_{a,1}^m\varphi_k(\sqrt{H_{a,1}})f
\end{equation*}
which, together with Bernstein inequalities \eqref{eq:Bernstein}, implies that
\begin{equation*}
\begin{split}
\|\psi_j(\sqrt{H_{a,1}})\varphi_k(\sqrt{H_{a,1}})f\|_{L^p(\R^3)}
&\leq C_m2^{-2jm}\|H_{a,1}^m\varphi_k(\sqrt{H_{a,1}})f\|_{L^p(\R^3)}\\
&\leq C_m2^{-2(j-k)m}\|f\|_{L^p(\R^3)}.
\end{split}
\end{equation*}
Then the claimed estimate \eqref{eq:j-k} follows.
To prove \eqref{Bes:equiv}, since the roles of $\varphi$ and $\psi$ are equivalent, it is sufficient to show that
\begin{equation*}
\|f\|_{\dot{\mathcal{B}}^{s,\psi}_{p,q}(\R^3)}\leq C\|f\|_{\dot{\mathcal{B}}^{s,\varphi}_{p,q}(\R^3)},\quad f\in\dot{\mathcal{B}}^{s,\varphi}_{p,q}(\R^3).
\end{equation*}
Since $\varphi$ is a partition of unity, it holds
\begin{equation*}
\psi_j(\sqrt{H_{a,1}})f=\sum_{k\in\mathbb{Z}}\psi_j(\sqrt{H_{a,1}})\varphi_k(\sqrt{H_{a,1}})f,\quad j\in\mathbb{Z}.
\end{equation*}
On the other hand, one has $\varphi_k=(\varphi_{k-1}+\varphi_k+\varphi_{k+1})\varphi_k$ in view of the support property of $\varphi$. Hence, we have
\begin{equation*}
\psi_j(\sqrt{H_{a,1}})f=\sum_{k\in\mathbb{Z}}\psi_j(\sqrt{H_{a,1}})(\varphi_{k-1}(\sqrt{H_{a,1}})+\varphi_k(\sqrt{H_{a,1}})
+\varphi_{k+1}(\sqrt{H_{a,1}}))\varphi_k(\sqrt{H_{a,1}})f,
\end{equation*}
which implies
\begin{equation*}
\|\psi_j(\sqrt{H_{a,1}})f\|_{L^p(\R^3)}\leq\sum_{k\in\mathbb{Z}}\|\psi_j(\sqrt{H_{a,1}})
(\varphi_{k-1}(\sqrt{H_{a,1}})+\varphi_k(\sqrt{H_{a,1}})+\varphi_{k+1}(\sqrt{H_{a,1}}))\varphi_k(\sqrt{H_{a,1}})f\|_{L^p(\R^3)}.
\end{equation*}
We now apply \eqref{eq:j-k} to deduce that for $m>|s|/2$
\begin{equation}\label{eq:dyadic}
\|\psi_j(\sqrt{H_{a,1}})f\|_{L^p(\R^3)}\leq\sum_{k\in\mathbb{Z}}2^{-2|j-k|m}\|\varphi_k(\sqrt{H_{a,1}})f\|_{L^p(\R^3)}.
\end{equation}
Inserting \eqref{eq:dyadic} into the expression of the Besov norm in \eqref{space:Besov} gives
\begin{align*}
\|f\|^q_{\dot{\mathcal{B}}^{s,\psi}_{p,q}(\R^3)}&=\sum_{j\in\mathbb{Z}}2^{jsq}\|\psi_j(\sqrt{H_{a,1}})f\|_{L^p(\R^3)}^q\\
&\leq C\sum_{j\in\mathbb{Z}}2^{jsq}\left(\sum_{k\in\mathbb{Z}}2^{-2|j-k|m}\|\varphi_k(\sqrt{H_{a,1}})f\|_{L^p(\R^3)}\right)^q\\
&\leq C\sum_{j\in\mathbb{Z}}\left(\sum_{k\in\mathbb{Z}}2^{-2|j-k|m+js}\|\varphi_k(\sqrt{H_{a,1}})f\|_{L^p(\R^3)}\right)^q
\end{align*}
which, together with Young's inequality, implies
\begin{equation*}
\|f\|_{\dot{\mathcal{B}}^{s,\psi}_{p,q}(\R^3)}\leq C\left(\sum_{k\in\mathbb{Z}}2^{sqk}\|\varphi_k(\sqrt{H_{a,1}})f\|_{L^p(\R^3)}^q\right)^{1/q}
=\|f\|_{\dot{\mathcal{B}}^{s,\varphi}_{p,q}(\R^3)}
\end{equation*}
as desired.
\end{proof}
\noindent In view of Proposition \ref{prop:equiva}, for $1\leq p,q<\infty$ and $s\in\R$, we may briefly write $\|\cdot\|_{\dot{\mathcal{B}}^s_{p,q}}$ for $\|\cdot\|_{\dot{\mathcal{B}}^{s,\psi}_{p,q}(\R^3)}$ if there is no confusion, where $\psi\in C_c^\infty(\R)$ is a partition of unity with $0\leq\psi\leq1$ and $\mathrm{supp}\psi\subset[1/2,2]$.

\subsection{Subordination formula}

In this subsection, we give a simplified version of the classical subordination formula introduced by M\"uller and Seeger \cite{MS08}.
\begin{proposition}[Subordination formula]\label{prop:subord}
Let $\eta\in C_c^\infty(\R)$ with $\mathrm{supp}\eta\subset[1/2,2]$.
Then there exist a symbol function $a\in C^\infty(\R\times\R)$ with $\mathrm{supp}a(\cdot,\sigma)\subset[1/16,4]$ for every $\sigma\in\R$ fulfilling
\begin{equation*}
|\partial_s^j\partial_\sigma^ka(s,\sigma)|\leq C_{j,k}(1+|s|)^{-k},\quad \forall j,k\geq0
\end{equation*}
and a Schwartz function $\rho(s,\sigma)\in\mathcal{S}(\R\times\R)$ such that
\begin{equation}\label{formula}
\eta(\sqrt{x}/2^j)e^{it\sqrt{x}}=\rho(t\sqrt{x}/2^j,2^jt)+\eta(\sqrt{x}/2^j)2^{j/2}\sqrt{t}\int_0^\infty a(s,2^jt)e^{i2^{j-2}t/s}e^{istx/2^j}\mathrm{d}s,
\end{equation}
for every $x\geq0$ and $t\geq2^{-j}$ with $j\in\mathbb{Z}$.
\end{proposition}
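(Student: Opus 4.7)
The plan is to derive the identity from the classical Poisson-to-heat subordination formula
\[e^{-t\sqrt{x}} = \frac{t}{2\sqrt{\pi}} \int_0^\infty s^{-3/2} e^{-t^2/(4s) - s x} \, \mathrm{d}s,\quad t > 0,\ x \geq 0,\]
by analytic continuation $t\mapsto -it$ and a suitable contour deformation, leading to a distributional oscillatory representation
\[e^{it\sqrt{x}} = C_0\, t \int_0^\infty s^{-3/2} e^{i(t^2/(4s) + sx)}\, \mathrm{d}s\]
for a fixed constant $C_0\in\C$. The phase $\phi(s)=t^2/(4s)+sx$ carries a unique non-degenerate critical point at $s_\ast = t/(2\sqrt{x})$ with $\phi(s_\ast)=t\sqrt{x}$, which is precisely the oscillating factor one wishes to isolate.

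I would then rescale via $s = 2^{-j} t u$. The critical point moves to $u_\ast = 2^j/(2\sqrt{x})$, which lies in $[1/4,1]$ exactly when $\sqrt{x}/2^j\in[1/2,2]$, i.e. on the support of $\eta(\sqrt{x}/2^j)$; in particular $u_\ast$ sits well inside the declared support $[1/16,4]$ of $a(\cdot,\sigma)$. Under this substitution the phase becomes $2^{j-2}t/u + utx/2^j$, matching the one in the statement, and the amplitude becomes $2^{j/2} t^{-1/2} u^{-3/2}\,\mathrm{d}u$, so after multiplying by the outer factor $t$ the declared prefactor $2^{j/2}\sqrt{t}$ emerges cleanly.

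Next I would introduce a smooth cutoff $\chi\in C_c^\infty(\R)$ with $\chi\equiv 1$ on $[1/8,2]$ and $\mathrm{supp}\,\chi\subset[1/16,4]$, and split the $u$-integral as $\chi(u)+(1-\chi(u))$. The $\chi$-piece defines the main term, with $a(u,\sigma)$ taken initially as $C_0\,\chi(u) u^{-3/2}$; since $\chi$ has compact support bounded away from the origin, the symbol estimates $|\partial_u^j\partial_\sigma^k a|\leq C_{j,k}(1+|u|)^{-k}$ are automatic. If exactness (rather than asymptotic equality up to $O(\sigma^{-1})$) forces subleading corrections, I would incorporate them iteratively as products of $\chi(u)$, smooth factors in $u$, and inverse powers of $\sigma=2^j t$, each of which stays in the symbol class. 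On the complementary $(1-\chi)$-piece the integrand is supported away from $u_\ast$, so $|\phi'(u)|$ is comparable to $2^j t$ (using $\sqrt{x}\sim 2^j$ in the active region); repeated integration by parts against $L=(i\phi'(u))^{-1}\partial_u$ then yields arbitrarily fast decay in $\sigma=2^j t\geq 1$. Viewing the output as a function of the two effectively independent variables $s=t\sqrt{x}/2^j$ and $\sigma=2^j t$, and combining non-stationary bounds with Leibniz on the amplitude, gives the Schwartz function $\rho(s,\sigma)$.

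The hard part will be the exactness issue: the bare choice $a=C_0\chi u^{-3/2}$ reproduces $e^{it\sqrt{x}}$ only up to a stationary-phase error of order $\sigma^{-1}$, so converting the asymptotic identity into an equality demands iterative absorption of these errors into $a(u,\sigma)$ while preserving uniform symbol seminorms and compact support in $u$. A parallel delicate point is confirming that the non-stationary remainder, as a function of the two essentially independent variables $s$ and $\sigma$, lies in $\mathcal{S}(\R\times\R)$ rather than merely decaying in $\sigma$ on $\mathrm{supp}\,\eta$; extracting decay in $s$ requires carefully exploiting the $t$-dependence buried in the phase once $\sigma$ is fixed, and this is the precise step where the lower bound $t\geq 2^{-j}$ is used.
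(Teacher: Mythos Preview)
The paper does not actually prove this proposition: immediately after the statement it writes ``The proof of Proposition~\ref{prop:subord} can be found in the paper of M\"uller and Seeger \cite{MS08} (see also D'Ancona, Pierfelice and Ricci \cite{DPR10} in Appendix); we omit the details here.'' So there is no in-paper argument to compare against.

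That said, your sketch is essentially the M\"uller--Seeger argument, and the computations you record are correct: the rescaling $s=2^{-j}tu$ indeed produces the phase $2^{j-2}t/u+utx/2^j$ and the prefactor $2^{j/2}\sqrt{t}$, and the stationary point lands in $[1/4,1]\subset[1/16,4]$ on $\mathrm{supp}\,\eta$. One clarification: your worry about ``exactness'' is slightly misplaced. The oscillatory identity $e^{it\sqrt{x}}=C_0\,t\int_0^\infty s^{-3/2}e^{i(t^2/(4s)+sx)}\,\mathrm{d}s$ is \emph{exact} once interpreted via contour rotation from the absolutely convergent region (not merely a stationary-phase asymptotic), so the $\chi/(1-\chi)$ split is already an equality and no iterative correction of $a$ is needed for that purpose; the $\sigma$-dependence of $a$ in the statement is there to absorb harmless lower-order factors coming from the regularisation, not stationary-phase remainders. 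The genuine work, as you note, is twofold: making the contour rotation rigorous (the integrand is only conditionally convergent at both endpoints), and verifying that the $(1-\chi)$ remainder can be written as a fixed Schwartz function of the two \emph{scale-invariant} variables. For the latter, observe that both sides of \eqref{formula} depend only on $\lambda=\sqrt{x}/2^j$ and $\sigma=2^jt$ (indeed $t\sqrt{x}=\sigma\lambda$ and $tx/2^j=\sigma\lambda^2$), so the remainder is a function of $(\lambda,\sigma)$; non-stationary integration by parts gives rapid decay in $\sigma\ge 1$, and the factor $\eta(\lambda)$ localises $\lambda$, which together yield the Schwartz bound.
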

\noindent The proof of Proposition \ref{prop:subord} can be found in the paper of M\"uller and Seeger \cite{MS08} (see also D'Ancona, Pierfelice and Ricci \cite{DPR10} in Appendix); we omit the details here.

\section{proof of Theorem \ref{thm:wave}}

With all the required ingredients in hand, we are now in the right position to prove the dispersive estimate for the wave equation (Theorem \ref{thm:wave}). For the convenience of the reader, we list these ingredients again: dispersive estimate for Schr\"odinger flow \eqref{dis:S} in Proposition \ref{prop:S-dis} of Section 3, Gaussian bound for heat kernel \eqref{dis:H} in Proposition \ref{prop:H-Guass} of Section 4, Bernstein inequalities \eqref{eq:Bernstein} in Proposition \ref{lem:Bernstein} and subordination formula \eqref{formula} in Proposition \ref{prop:subord} of Section 5.

To prove Theorem \ref{thm:wave}, a crucial ingredient is the following frequency-localized decay estimate for the half-wave propagator $e^{it\sqrt{H_{a,1}}}$ since the desired dispersive estimate \eqref{dis:wave} then follows from the definition of the distorted Besov space $\dot{\mathcal{B}}^1_{1,1}(\R^3)$ by summing all the frequencies.
\begin{lemma}\label{lem:disper}
Let $0<T<\pi$ and $\psi\in C_c^\infty(\R)$ with $0\leq\psi(\lambda)\leq1$ and $\mathrm{supp}\psi\subset[1/2,2]$. Then, for all $j\in\mathbb{Z},|t|<T$ and $f\in L^1(\R^3)$, there exists a constant $C>0$ independent of $t$ such that
\begin{equation}\label{dis:half-wave}
\|\psi_j(\sqrt{H_{a,1}})e^{it\sqrt{H_{a,1}}}f\|_{L^\infty(\R^3)}\leq C2^{-3j}\langle2^jt\rangle^{-1}\|f\|_{L^1(\R^3)},
\end{equation}
where $\psi_j(\lambda):=\psi(2^{-j}\lambda)$ and $\langle\cdot\rangle:=(1+|\cdot|^2)^{1/2}$.
\end{lemma}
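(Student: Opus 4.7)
The plan is to reduce the half-wave propagator estimate to the Schr\"odinger dispersive bound of Proposition \ref{prop:S-dis}, using M\"uller--Seeger's subordination formula (Proposition \ref{prop:subord}) as the transfer mechanism, and splitting into two regimes according to the size of $2^jt$.

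\textbf{Regime $2^jt\lesssim 1$.} Here $\langle 2^jt\rangle\sim 1$, so it suffices to bound the operator norm $\|\psi_j(\sqrt{H_{a,1}})e^{it\sqrt{H_{a,1}}}\|_{L^1\to L^\infty}$ uniformly by $2^{3j}$. Pick $\tilde\psi\in C_c^\infty(\R)$ equal to $1$ on $\mathrm{supp}\,\psi$, so that $\psi_j=\tilde\psi_j\psi_j$. Using the commutativity of the functional calculus and the $L^2$-unitarity of the half-wave propagator,
\[
\|\psi_j(\sqrt{H_{a,1}})e^{it\sqrt{H_{a,1}}}f\|_{L^\infty}\le \|\tilde\psi_j(\sqrt{H_{a,1}})\|_{L^2\to L^\infty}\,\|\psi_j(\sqrt{H_{a,1}})f\|_{L^2}\lesssim 2^{3j/2}\cdot 2^{3j/2}\|f\|_{L^1},
\]
where each factor $2^{3j/2}$ comes from Bernstein (Lemma \ref{lem:Bernstein}). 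This gives the desired bound in this regime.

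\textbf{Regime $2^jt\geq 1$.} I would apply Proposition \ref{prop:subord} with $x=H_{a,1}$ to decompose
\[
\psi_j(\sqrt{H_{a,1}})e^{it\sqrt{H_{a,1}}}=\rho\bigl(t\sqrt{H_{a,1}}/2^j,2^jt\bigr)+\psi_j(\sqrt{H_{a,1}})\cdot 2^{j/2}\sqrt{t}\int_0^\infty a(s,2^jt)\,e^{i 2^{j-2}t/s}\,e^{istH_{a,1}/2^j}\,ds.
\]
The $\rho$-term is handled by the Schwartz decay $|\rho(u,2^jt)|\lesssim_N\langle 2^jt\rangle^{-N}$ combined with a spectral multiplier bound on $L^1\to L^\infty$ that follows from the Gaussian heat-kernel estimate of Proposition \ref{prop:H-Guass} exactly as in the kernel bound \eqref{kernel:bound} used in Lemma \ref{lem:Bernstein}; this yields an $L^1\to L^\infty$ norm of order $2^{3j}\langle 2^jt\rangle^{-N}$, which is much stronger than needed.

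For the integral term, the key step is to apply Proposition \ref{prop:S-dis} to the Schr\"odinger propagator $e^{istH_{a,1}/2^j}$ at effective time $\tau=st/2^j$ to obtain
\[
\|\psi_j(\sqrt{H_{a,1}})\,e^{istH_{a,1}/2^j}\|_{L^1\to L^\infty}\lesssim |\sin(st/2^j)|^{-3/2},
\]
where the harmless multiplier $\psi_j(\sqrt{H_{a,1}})$ is absorbed via its bounded $L^\infty\to L^\infty$ norm (a consequence of the kernel bound \eqref{kernel:bound}). Since $\mathrm{supp}\,a(\cdot,2^jt)\subset[1/16,4]$ and $|a|\lesssim 1$, the $s$-integral runs over a bounded interval, and in the regime $2^jt\geq 1$ together with $t<T<\pi$ one has $st/2^j$ small enough that $|\sin(st/2^j)|\sim st/2^j$. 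Thus
\[
\Bigl\|\text{integral term}\Bigr\|_{L^1\to L^\infty}\lesssim 2^{j/2}\sqrt{t}\int_{1/16}^{4}(st/2^j)^{-3/2}\,ds\lesssim 2^{j/2}\sqrt{t}\cdot(t/2^j)^{-3/2}=2^{2j}t^{-1}\sim 2^{3j}\langle 2^jt\rangle^{-1},
\]
matching the target.

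\textbf{Expected main obstacle.} The delicate issue is ensuring the parameter $\tau=st/2^j$ stays in a range where the Schr\"odinger dispersive constant $|\sin\tau|^{-3/2}$ is both finite and behaves like $\tau^{-3/2}$; this is what forces the combined use of the constraints $t<T<\pi$ and $2^jt\geq 1$ (so that $\tau$ is small and bounded away from the singular set $\pi\Z\setminus\{0\}$). A secondary technical point is to make the spectral multiplier bound for the $\rho$-term rigorous; this is routine given the Gaussian heat kernel estimate but must be tracked with the Schwartz decay of $\rho$ in both arguments.
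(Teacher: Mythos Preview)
Your proposal is correct and follows essentially the same approach as the paper's own proof: both split into the regimes $2^jt\lesssim 1$ (handled by Bernstein and $L^2$-unitarity) and $2^jt\geq 1$ (handled via the M\"uller--Seeger subordination formula, with the $\rho$-term controlled by Schwartz decay and the integral term by the Schr\"odinger dispersive bound of Proposition \ref{prop:S-dis}). You even flag explicitly the range-of-$\tau$ issue for $|\sin(st/2^j)|^{-3/2}\sim (st/2^j)^{-3/2}$, which the paper passes over silently.
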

\begin{proof}
It suffices to consider the case $t>0$ since the case $t<0$ can be treated similarly. We follow the proof of the main result of \cite{DPR10}.
Note that $\psi_j=\psi_j(\psi_{j-1}+\psi_j+\psi_{j+1})$, then we have for $t<2^{-j}$
\begin{equation*}
\|\psi_j(\sqrt{H_{a,1}})e^{it\sqrt{H_{a,1}}}f\|_{L^\infty(\R^3)}
\leq\sum_{k=0}^2\|\psi_j(\sqrt{H_{a,1}})e^{it\sqrt{H_{a,1}}}\psi_{j+k-1}(\sqrt{H_{a,1}})f\|_{L^\infty(\R^3)}.
\end{equation*}
By Lemma \ref{lem:Bernstein}, we obtain
\begin{align*}
\|\psi_j(\sqrt{H_{a,1}})e^{it\sqrt{H_{a,1}}}f\|_{L^\infty(\R^3)}
&\leq C\sum_{k=0}^2\|\psi_j(\sqrt{H_{a,1}})\|_{L^2(\R^3)\rightarrow L^\infty(\R^3)}\|e^{it\sqrt{H_{a,1}}}\|_{L^2(\R^3)\rightarrow L^2(\R^3)}\\
&\qquad\qquad \times\|\psi_{j+k-1}(\sqrt{H_{a,1}})\|_{L^1(\R^3)\rightarrow L^2(\R^3)}\|f\|_{L^1(\R^3)}\\
&\leq C2^{3j}\|f\|_{L^1(\R^3)}\sim\frac{2^{3j}}{1+2^j|t|}\|f\|_{L^1(\R^3)}.
\end{align*}
For $t\geq2^{-j}$, we apply Proposition \ref{prop:subord} to write
\begin{align*}
\psi_j(\sqrt{H_{a,1}})e^{it\sqrt{H_{a,1}}}f&=\rho(2^{-j}t\sqrt{H_{a,1}},2^jt)f\\
&\quad+\psi_j(\sqrt{H_{a,1}})2^\frac{j}{2}\sqrt{t}\int_0^\infty a(s,2^jt)e^{i2^{j-2}t/s}e^{i2^{-j}stH_{a,1}}f\mathrm{d}s\\
&:=S_1+S_2.
\end{align*}
For the first term $S_1$, by applying Lemma \ref{lem:Bernstein} and noting the fact $\rho\in\mathcal{S}(\R\times\R)$, we have
\begin{align*}
|S_1|&\leq C\|\tilde{\psi}_j(\sqrt{H_{a,1}})\|_{L^2(\R^3)\rightarrow L^\infty(\R^3)}\|\rho(2^{-j}t\sqrt{H_{a,1}},2^jt)\|_{L^2(\R^3)\rightarrow L^2(\R^3)}\\
&\qquad\quad \times\|\tilde{\psi}_j(\sqrt{H_{a,1}})\|_{L^1(\R^3)\rightarrow L^2(\R^3)}\|f\|_{L^1(\R^3)}\\
&\leq C 2^{3j}\|\rho(2^{-j}t\sqrt{x},2^jt)\|_{L_x^\infty(\R)}\|f\|_{L^1(\R^3)}\\
&\leq C 2^{3j}(2^jt)^{-1}\|f\|_{L^1(\R^3)}\\
&\leq C 2^{2j}t^{-1}\|f\|_{L^1(\R^3)}\\
&\sim\frac{2^{3j}}{1+2^j|t|}\|f\|_{L^1(\R^3)},
\end{align*}
where $\tilde{\psi}_j=\psi_{j-1}+\psi_j+\psi_{j+1}$.

\noindent For the second term $S_2$, by \eqref{dis:S} (dispersive estimate for $e^{-itH_{a,1}}$) and Lemma \ref{lem:Bernstein}, we obtain
\begin{align*}
|S_2|&\leq C2^\frac{j}{2}\sqrt{t}\int_0^\infty|a(s,2^jt)|(2^{-j}st)^{-3/2}\|f\|_{L^1(\R^3)}\mathrm{d}s\\
&\leq C2^{2j}t^{-1}\|f\|_{L^1(\R^3)}\sim\frac{2^{3j}}{1+2^j|t|}\|f\|_{L^1(\R^3)}.
\end{align*}
Therefore, the proof of \eqref{dis:half-wave} is completed.
\end{proof}
\begin{remark}
Theorem \ref{thm:wave} can be derived from Lemma \ref{lem:disper} by the definition of the distorted Besov space $\dot{\mathcal{B}}^1_{1,1}$ in \eqref{Besov}. Indeed, let $\tilde{\psi}_j=\tilde{\psi}(2^{-j}\sqrt{H_{a,1}})$ and $\tilde{\psi}(\lambda)=\lambda^{-1}\psi(\lambda)$ with $\psi$ as in Lemma \ref{lem:disper}.
Note that from \eqref{dis:half-wave}, we have
\begin{equation*}
\|e^{it\sqrt{H_{a,1}}}\tilde{\psi}_jf\|_{L^\infty(\R^3)}\leq C2^{2j}|t|^{-1}\|f\|_{L^1(\R^3)},
\end{equation*}
which implies
\begin{align*}
\left\|\frac{e^{it\sqrt{H_{a,1}}}}{\sqrt{H_{a,1}}}f\right\|_{L^\infty(\R^3)}
&\leq\sum_{j\in\mathbb{Z}}\left\|\frac{e^{it\sqrt{H_{a,1}}}}{\sqrt{H_{a,1}}}\psi_j(\sqrt{H_{a,1}})f\right\|_{L^\infty(\R^3)}\\
&\leq C|t|^{-1}\sum_{j\in\mathbb{Z}}2^{2j}\|H^{-1/2}_{a,1}\psi_j(\sqrt{H_{a,1}})f\|_{L^1(\R^3)}\\
&=C|t|^{-1}\sum_{j\in\mathbb{Z}}2^j\|\tilde{\psi}_j(\sqrt{H_{a,1}})f\|_{L^1(\R^3)}\\
&=C|t|^{-1}\|f\|_{\dot{\mathcal{B}}^{1,\tilde{\psi}}_{1,1}(\R^3)}\\
&\sim|t|^{-1}\|f\|_{\dot{\mathcal{B}}^1_{1,1}(\R^3)}.
\end{align*}
\end{remark}
\begin{remark}
In view of Remark \ref{rem:negative}, we point out that the argument of obtaining the dispersive estimate for the wave equation in Theorem \ref{thm:wave} fails if $-\frac{1}{4}\leq a<0$. A weighted version of the dispersive estimate \eqref{dis:wave} should be valid for this case $-\frac{1}{4}\leq a<0$, but this is open question.
Nevertheless, motivated by \cite[Remark 1.2]{FZZ22}, it is interesting to conjecture that the corresponding Strichartz estimates as in Theorem \ref{thm:str} should be true for the case $-\frac{1}{4}<a<0$ or even the critical case $a=-\frac{1}{4}$ as Mizutani \cite{Miz17} for the Schr\"odinger equation; these problems may be the main objects of future investigations.
\end{remark}

\section{proof of Theorem \ref{thm:str}}

In this section, we prove Strichartz estimates in Theorem \ref{thm:str}. It is sufficient to show that for admissible $q,r$ in \eqref{adm:scaling} and $s$ in \eqref{cd:gap}, it holds
\begin{equation}\label{est:strichartz}
\|e^{it\sqrt{H_{a,1}}}f\|_{L_t^q(I;L_x^r(\R^3))}\leq C\|f\|_{\dot{H}_{osc}^s(\R^3)},
\end{equation}
where $C>0$ is a constant independent of $f\in\dot{H}_{osc}^s(\R^3)$ and $I=[a,b]\subset(0,\pi)$.
\begin{proposition}\label{prop:str}
Let $\psi\in C_c^\infty(\R)$ and $\psi_j(\lambda)=\psi(2^{-j}\lambda)$ with $0\leq\psi(\lambda)\leq1$ and $\mathrm{supp}\psi\subset[1/2,2]$. Then, for all $j\in\mathbb{Z}$ and $f\in L^2(\R^3)$, there exists some constant $C>0$ such that
\begin{equation}\label{str:half-wave}
\|\psi_j(\sqrt{H_{a,1}})e^{it\sqrt{H_{a,1}}}f\|_{L_t^q(I;L_x^r(\R^3))}\leq C2^{2sj}\|f\|_{L^2(\R^3)},
\end{equation}
with $I=[a,b]\subset(0,\pi)$ and $(q,r),s$ in \eqref{adm:scaling}, \eqref{cd:gap}, respectively.
\end{proposition}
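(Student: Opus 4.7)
The plan is to apply the abstract Strichartz machinery of Keel-Tao \cite{KT98} to the frequency-localized half-wave propagator
\begin{equation*}
T_j(t):=\psi_j(\sqrt{H_{a,1}})e^{it\sqrt{H_{a,1}}}.
\end{equation*}
Two ingredients are required. The energy bound $\|T_j(t)f\|_{L^2(\R^3)}\leq\|f\|_{L^2(\R^3)}$ is immediate from the spectral theorem, since $e^{it\sqrt{H_{a,1}}}$ is unitary on $L^2$ and $|\psi_j|\leq 1$. The dispersive bound for $T_j(t)T_j(s)^{*}=\psi_j^{2}(\sqrt{H_{a,1}})e^{i(t-s)\sqrt{H_{a,1}}}$ is supplied by Lemma \ref{lem:disper} applied with the cutoff $\psi^{2}$ (whose support remains in $[1/2,2]$), yielding a pointwise-in-$(t,s)$ bound of the shape
\begin{equation*}
\|T_j(t)T_j(s)^{*}f\|_{L^\infty(\R^3)}\lesssim 2^{3j}\langle 2^j(t-s)\rangle^{-1}\|f\|_{L^1(\R^3)}.
\end{equation*}

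Next I would interpolate this dispersive bound against the trivial bound $\|T_j(t)T_j(s)^{*}\|_{L^2\to L^2}\leq 1$ to obtain, for every $r\in[2,\infty]$,
\begin{equation*}
\|T_j(t)T_j(s)^{*}f\|_{L^r(\R^3)}\lesssim\bigl(2^{3j}\langle 2^j(t-s)\rangle^{-1}\bigr)^{1-2/r}\|f\|_{L^{r'}(\R^3)}.
\end{equation*}
By the standard $TT^{*}$ duality, \eqref{str:half-wave} is equivalent to a mapping estimate for $T_jT_j^{*}\colon L^{q'}_tL^{r'}_x\to L^q_tL^r_x$. Applying Minkowski's inequality in space reduces this to a one-dimensional convolution bound in time against the kernel $(1+2^j|t|)^{-(1-2/r)}$, which I would dispatch by Young's inequality when $1/q+1/r<1/2$ and by the one-dimensional Hardy-Littlewood-Sobolev inequality at the sharp case $1/q+1/r=1/2$. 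No bilinear interpolation is needed: the Keel-Tao endpoint $(q,r)=(2,\infty)$ is excluded by the standing assumption $r<\infty$ in \eqref{adm:scaling}, which forces $q>2$ throughout.

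The substantive analytic content has already been carried out upstream: the dispersive estimate for the Schr\"odinger flow (Proposition \ref{prop:S-dis}) is transferred to the half-wave propagator via the subordination formula to produce Lemma \ref{lem:disper}, so the present step is largely mechanical. The only obstacle is the bookkeeping of powers of $2^j$: the interpolated dispersive prefactor $2^{3j(1-2/r)}$ combines with the $L^{q/2}_t$-norm of the convolution kernel (which, after the substitution $\tau=2^jt$, contributes $2^{-2j/q}$), and taking the square root dictated by $TT^{*}$ yields a total exponent $[3(1-2/r)-2/q]j/2$, which up to the normalisation convention on the Sobolev scale matches the gap index $s$ defined in \eqref{cd:gap}. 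Once \eqref{str:half-wave} is in hand, the full Strichartz estimate \eqref{est:strichartz} follows by a Littlewood-Paley square-function argument using the equivalence $\dot{\mathcal{H}}^s_{a,1}\simeq\dot{\mathcal{B}}^s_{2,2}$ recorded after \eqref{Besov}, together with Minkowski's inequality in the indices $q,r\geq 2$.
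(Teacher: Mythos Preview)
Your proposal is correct and follows essentially the same strategy as the paper: feed the $L^2$ energy bound and the $L^1\to L^\infty$ dispersive bound from Lemma~\ref{lem:disper} into the Keel--Tao machinery. The paper simply invokes \cite[Corollary~1.3]{KT98}, whereas you spell out the $TT^*$ argument and observe that, because $r<\infty$ forces $q>2$ on the admissible set \eqref{adm:scaling}, the one-dimensional Hardy--Littlewood--Sobolev (or Young) inequality in the time variable already suffices and the bilinear endpoint interpolation of \cite{KT98} is not needed---a legitimate and mild simplification. Your power-counting is also correct: the resulting exponent is $sj$, consistent with the paper's intermediate bound $\|H_{a,1}^{s/2}\psi_j(\sqrt{H_{a,1}}) f\|_{L^2}\sim 2^{sj}\|\psi_j(\sqrt{H_{a,1}}) f\|_{L^2}$; the $2^{2sj}$ appearing in the displayed statement \eqref{str:half-wave} is evidently a misprint for $2^{sj}$.
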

\begin{proof}
First, we prove that
\begin{align}\label{eq:h-wave}
&\left\|\left(\sum_{j\in\mathbb{Z}}|\psi_j(\sqrt{H_{a,1}})e^{it\sqrt{H_{a,1}}}f|^2\right)^{1/2}\right\|_{L_t^q(I;L_x^r(\R^3))}\nonumber\\
&\leq C\left\|\left(\sum_{j\in\mathbb{Z}}|H_{a,1}^{\frac{s}{2}}\psi_j(\sqrt{H_{a,1}})e^{it\sqrt{H_{a,1}}}f|^2\right)^{1/2}\right\|_{L^2(\R^3)}.
\end{align}
Indeed, by Lemma \ref{lem:disper}, we obtain
\begin{align*}
\|\psi_j(\sqrt{H_{a,1}})e^{it\sqrt{H_{a,1}}}f\|_{L^\infty(\R^3)}
&=\|e^{it\sqrt{H_{a,1}}}\tilde{\psi}_j(\sqrt{H_{a,1}})\psi_j(\sqrt{H_{a,1}})f\|_{L^\infty(\R^3)}\\
&\leq C\frac{2^{3j}}{1+2^j|t|}\|\psi_j(\sqrt{H_{a,1}})f\|_{L^1(\R^3)},
\end{align*}
where $\tilde{\psi}_j=\psi_{j-1}+\psi_j+\psi_{j+1}$.
Moreover, by the unitary property of the propagator $e^{it\sqrt{H_{a,1}}}$
\begin{equation*}
\|e^{it\sqrt{H_{a,1}}}\psi_j(\sqrt{H_{a,1}})f\|_{L^2(\R^3)}=\|\psi_j(\sqrt{H_{a,1}})f\|_{L^2(\R^3)},
\end{equation*}
we can conclude, by following the argument of Keel-Tao \cite[Corollary 1.3]{KT98}, that
\begin{equation*}
\|e^{it\sqrt{H_{a,1}}}\psi_j(\sqrt{H_{a,1}})f\|_{L_t^q(I;L_x^r(\R^3))}\leq C\|H_{a,1}^{\frac{s}{2}}\psi_j(\sqrt{H_{a,1}})f\|_{L^2(\R^3)}.
\end{equation*}
Thus, the desired estimate \eqref{eq:h-wave} follows by Minkowski's inequality since $q,r\geq2$.

\noindent Next, by the almost orthogonality
\begin{equation}\label{eq:LP}
\left\|\left(\sum_{j\in\mathbb{Z}}|\psi_j(\sqrt{H_{a,1}})f|^2\right)^{1/2}\right\|_{L^r(\R^3)}\sim\|f\|_{L^r(\R^3)},\quad \forall1<r<\infty,
\end{equation}
we obtain
\begin{equation*}
\|e^{it\sqrt{H_{a,1}}}f\|_{L_t^q(I;L_x^r(\R^3))}\leq C\|H_{a,1}^{\frac{s}{2}}f\|_{L^2(\R^3)}.
\end{equation*}
Therefore, we have
\begin{equation*}
\left\|\cos(t\sqrt{H_{a,1}})f+\frac{\sin(t\sqrt{H_{a},1})}{\sqrt{H_{a,1}}}g\right\|_{L_t^q(I;L_x^r(\R^3))}\leq C\|f\|_{\dot{H}_{osc}^s(\R^3)}+\|g\|_{\dot{H}_{osc}^{s-1}(\R^3)}
\end{equation*}
as desired.
\end{proof}

%{\bf Acknowledgements:} The author is currently a graduate student in Beijing Institute of Technology and he would like to thank his advisor professor Junyong Zhang for introducing the topic of this paper. The author would like to express special appreciations to the anonymous reviewers for the insightful comments and valuable suggestions on improving the manuscript. The author claims no funds supported during the preparation of the present article.

\newpage

\vspace{0.8cm}
\begin{center}

\end{center}


\begin{thebibliography}{99}
%\addcontentsline{toc}{section}{References}


\bibitem{AS65} M. Abramowitz and I. A. Stegun, Handbook of Mathematical Functions with Formulas, Graphs and Mathematical Tables, Dover Publications, New York, 1972.

\bibitem{AAR99} G. E. Andrews, R. Askey and R. Roy, Special Functions, Cambridge University Press, Cambridge, 1999.

\bibitem{AF03} M. J. Ablowitz and A. S. Fokas, Introduction and Applications of Complex Variables, Cambridge: Cambridge University Press, second edition, 2003.

\bibitem{BS93} M. Beals and W. Strauss, $L^p$ estimates for the wave equation with a potential, Commun. Partial Differential Equations 18 (1993), no. 7-8, 1365-1397.

\bibitem{Beal94} M. Beals, Optimal $L^\infty$ decay for solutions to the wave equation with a potential, Commun. Partial Differential Equations 19 (1994), no. 7-8, 1319-1369.

\bibitem{BPST03} N. Burq, F. Planchon, J. G. Stalker and A. S. Tahvildar-Zadeh, Strichartz estimates for the wave and Schr\"odinger equations with the inverse-square potential, J. Funct. Anal. 203 (2003), no. 2, 519-549.

\bibitem{BPST04} N. Burq, F. Planchon, J. G. Stalker and A. S. Tahvildar-Zadeh, Strichartz estimates for the wave and Schr\"odinger equations with potentials of critical decay, Indiana Univ. Math. J. 53 (2004), no. 6, 1665-1680.

\bibitem{BCT04} P. Bizo\'{n}, T. Chmaj and Z. Tabor, On blowup for semilinear wave equations with a focusing nonlinearity, Nonlinearity 17 (2004), no. 6, 2187-2201.

\bibitem{BT06} B. Bongioanni and J. L. Torrea, Sobolev spaces associated to the harmonic oscillator, Proc. India Acad. Sci. Math. Sci. 116 (2006), no. 3, 337-360.

\bibitem{Bec11} M. Beceanu, New estimates for a time-dependent Schr\"odinger equation, Duke Math. J. 159 (2011), no. 3, 417-477.

\bibitem{BG12} M. Beceanu and M. Goldberg, Schr\"odinger dispersive estimates for a scaling-critical class of potentials, Comm. Math. Phys. 314 (2012), no. 2, 471-481.

\bibitem{Bec14} M. Beceanu, Structure of wave operators for a scaling-critical class of potentials, Amer. J. Math. 136 (2014), no. 2, 255-308.

\bibitem{BDH16} T. A. Bui, X. T. Duong and Y. Hong, Dispersive and Strichartz estimates for the three-dimensional wave equation with a scaling-critical class of potentials, J. Funct. Anal. 271 (2016), no. 8, 2215-2246.

\bibitem{DP05} P. D'Ancona and V. Pierfelice, On the wave equation with a large rough potential, J. Funct. Anal. 227 (2005), no. 1, 30-77.

\bibitem{DF06} P. D'Ancona and L. Fanelli, $L^p-$boundedness of the wave operator for the one dimensional Schr\"odinger operator, Comm. Math. Phys. 268 (2006), no. 2, 415-438.

\bibitem{DPR10} P. D'Ancona, V. Pierfelice and F. Ricci, On the wave equation associated to the Hermite and the twisted Laplacian, J. Fourier Anal. Appl. 16 (2010), no. 2, 294-310.

\bibitem{FFFP13} L. Fanelli, V. Felli, M. A. Fontelos and A. Primo, Time decay of scaling critical electromagnetic Schr\"odinger flows, Comm. Math. Phys. 324 (2013), no. 3, 1033-1067.

\bibitem{FZZ22} L. Fanelli, J. Zhang and J. Zheng, Dispersive estimates for 2D-wave equations with critical potentials, Adv. Math. 400 (2022), Paper No. 108333, 46 pp.

\bibitem{GV95} J. Ginibre and G. Velo, Generalized Strichartz inequalities for the wave equation, J. Funct. Anal. 133 (1995), no. 1, 50-68.

\bibitem{GV03} V. Georgiev and N. Visciglia, Decay estimates for the wave equation with potential, Commun. Partial Differential Equations 28 (2003), no. 7-8, 1325-1369.

\bibitem{GS04} M. Goldberg and W. Schlag, Dispersive estimates for the Schr\"odinger operators in dimensions one and three, Comm. Math. Phys. 251 (2004), no. 1, 157-178.

\bibitem{GVV06} M. Goldberg, L. Vega and N. Visciglia, Counterexamples of Strichartz inequalities for Schr\"odinger equations with repulsive potentials, Int. Math. Res. Not. (2006), Art. ID 13927, 16 pp.

\bibitem{Gol06} M. Goldberg, Dispersive estimates for the three-dimensional Schr\"odinger equation with rough potentials, Amer. J. Math. 128 (2006), no. 3, 731-750.

\bibitem{Gold06} M. Goldberg, Dispersive bounds for the three-dimensional Schr\"odinger equation with almost critical potentials, Geom. Funct. Anal. 16 (2006), no. 3, 517-536.

\bibitem{Gold12} M. Goldberg, Dispersive estimates for Schr\"odinger operators with measure-valued potentials in $\mathbb{R}^3$, Indiana Univ. Math. J. 61 (2012), no. 6, 2123-2141.

\bibitem{Hard20} G. H. Hardy, Note on a theorem of Hilbert, Math. Z. 6 (1920), no. 3-4, 314-317.

\bibitem{JSS91} J.-L. Journ\'{e}, A. Soffer and C. D. Sogge, Decay estimates for Schr\"odinger operators, Comm. Pure Appl. Math. 44 (1991), no. 5, 573-604.

\bibitem{KT98} M. Keel and T. Tao, Endpoint Strichartz estimates, Amer. J. Math. 120 (1998), no. 5, 955-980.

\bibitem{KT05} H. Koch and D. Tataru, $L^p$ eigenfunction bounds for the Hermite operator, Duke Math. J. 128 (2005), no. 2, 369-392.

\bibitem{KS07} J. Krieger and W. Schlag, On the focusing critical semi-linear wave equation, Amer. J. Math. 129 (2007), no. 3, 843-913.

\bibitem{KMVZZ18} R. Killip, C. Miao, M. Visan, J. Zhang and J. Zheng, Sobolev spaces adapted to the Schr\"odinger operator with inverse-square potential, Math. Z. 288 (2018), no. 3-4, 1273-1298.

\bibitem{Mul66} C. M\"uller, Spherical harmonics, Lecture Notes in Mathematics 17, Berlin: Springer-Veilag, 1966.

\bibitem{MS08} D. M\"uller and A. Seeger, Sharp $L^p$ bounds for the wave equation on groups of Heisenberg type, Anal. PDEs 8 (2015), no. 5, 1051-1100.

\bibitem{Miz17} H. Mizutani, Remarks on endpoint Strichartz estimates for Schr\"odinger equations with the critical inverse-square potential, J. Differ. Equ. 263 (2017), no. 7, 3832-3853.

\bibitem{MSZ23} C. Miao, X. Su and J. Zheng, The $W^{s,p}$-boundedness of stationary wave operators for the Schr\"odinger operator with inverse-square potential, Trans. Amer. Math. Soc. 376 (2023), no. 3, 1739-1797.

\bibitem{OZ06} G. \'{O}lafsson and S. Zheng, Function spaces associated with Schr\"odinger operators: the P\"oschl-Teller potential, J. Fourier Anal. Appl. 12 (2006), no. 6, 653-674.

\bibitem{PST03} F. Planchon, J. G. Stalker and A. S. Tahvildar-Zadeh, $L^p$ estimates for the wave equation with the inverse-square potential, Discrete Contin. Dyn. Syst. 9 (2003), no. 2, 427-442.

\bibitem{PSTZ03} F. Planchon, J. G. Stalker and A. S. Tahvildar-Zadeh, Dispersive estimates for the wave equation with the inverse-square potential, Discrete Contin. Dyn. Syst. 9 (2003), no. 6, 1387-1400.

\bibitem{Pie07} V. Pierfelice, Decay estimate for the wave equation with a small potential, NoDEA Nonlinear Differential Equations Appl. 13 (2007), no. 5-6, 511-530.

\bibitem{RS04} I. Rodnianski and W. Schlag, Time decay for solutions of Schr\"odinger equations with rough and time-dependent potentials, Invent. Math. 155 (2004), no. 3, 451-513.

\bibitem{Seg76} I. Segal, Space-time decay for solutions of wave equations, Adv. Math. 22 (1976), no. 3, 305-311.

\bibitem{Str77} R. S. Strichartz, Restrictions of Fourier transforms to quadratic surfaces and decay of solutions of wave equations, Duke Math. J. 44 (1977), no. 3, 705-714.

\bibitem{Sim82} B. Simon, Schr\"odinger semigroups, Bull. Amer. Math. Soc. 7 (1982), no. 3, 447-526.

\bibitem{Szp01} N. Szpak, Relaxation to intermediate attractors in nonlinear wave equations, Theoret. Math. Phys. 127 (2001), no. 3, 817-826.

\bibitem{Tom75} P. A. Tomas, A restriction theorem for the Fourier transform, Bull. Amer. Math. Soc. 81 (1975), 477-478.

\bibitem{Wat44} G. N. Watson, A Treatise on the Theory of Bessel Functions. Second Edition, Cambridge University Press, 12 (1944).

\bibitem{Wed99} R. Weder, The $W^{k,p}$-continuity of the Schr\"odinger wave operators on the line, Comm. Math. Phys. 208 (1999), no. 2, 507-520.

\bibitem{Yaj87} K. Yajima, Existence of solutions for Schr\"odinger evolution equations, Comm. Math. Phys. 110 (1987), no. 3, 415-426.

\bibitem{Yaj93} K. Yajima, The $W^{k,p}$-continuity of wave operators for Schr\"odinger operators, Proc. Japan Acad. Ser. A Math. Sci. 69 (1993), no. 4, 94-98.

\bibitem{Yaj95} K. Yajima, The $W^{k,p}$-continuity of wave operators for Schr\"odinger operators, J. Math. Soc. Japan 47 (1995), no. 3, 551-581.

\bibitem{Yaj99} K. Yajima, $L^p$-boundedness of wave operators for two-dimensional Schr\"odinger operators, Comm. Math. Phys. 208 (1999), no. 1, 125-152.

\bibitem{Yaj06} K. Yajima, The $L^p$ boundedness of wave operators for Schr\"odinger operators with threshold singularities. I. The odd dimensional case, J. Math. Sci. Univ. Tokyo 13 (2006), no. 1, 43-93.

%\bibitem{AB59} Y. Aharonov and D. Bohm, Significance of electromagnetic potentials in quantum theory, Phys. Rev. Lett. 115 (1959), 485-491.

%\bibitem{AS70} P. Alsholm and G. Schmidt, Spectral and scattering theory for Schr\"odinger operators, Arch. Ration. Mech. Anal. 40 (1970-1971) 281-311.

%\bibitem{Leb72} N. N. Lebedev, Special functions and their applications, Revised edition, translated from the Russian and edited by Richard A. Silverman, Unabridged and corrected republication, New York: Dover Publications, Inc., 1972.

%\bibitem{RS75} M. Reed and B. Simon, Methods of Modern Mathematical Physics II. Fourier Analysis, Self-adjointness, Academic Press, New York-London, 1975.

%\bibitem{Ruij83} S. N. M. Ruijsenaars, The Aharonov-Bohm effect and scattering theory, Annals Phys. 146 (1983), 1-34.

%\bibitem{FJ85} M. Frazier and B. Jawerth, Decomposition of Besov spaces, Indiana Univ. Math. J. 34 (1985), 777-799.

%\bibitem{DS98} L. D\c{a}browski and P. \v{S}\v{t}ov\'{\i}\v{c}ek, Aharonov-Bohm effect with $\delta$-type interaction, J. Math. Phys. 39 (1998), 47-62.

%\bibitem{AT98} R. Adami and A. Teta, On the Aharonov-Bohm Hamiltonian, Lett. Math. Phys. 43 (1998), 43.

%\bibitem{Thie00} H.-P. Thienel, Quantum mechanics of an electron in a homogeneous magnetic field and a singular magnetic flux tube, Annals Phys. 280 (2000), 140-162.

%\bibitem{Cuc00} S. Cuccagna, On the wave equation with a potential, Commun. Partial Differ. Equ. 25 (2000), no. 7-8, 1549-1565.

%\bibitem{CS01} S. Cuccagna and P. Schirmer, On the wave equation with a magnetic potential, Commun. Pure Appl. Math. 54 (2001), no. 2, 135-152.

%\bibitem{BEL01} A. A. Balinsky, W. D. Evans and R. T. Lewis, On the number of negative eigenvalues of Schr\"odinger operators with an Aharonov-Bohm magnetic field, Proc. R. Soc. Lond. A 457(2001), 2481-2489.

%\bibitem{ST02} G. Staffilani and D. Tataru, Strichartz estimates for a Schr\"odinger operator with nonsmooth coefficients, Commun. Partial Differ. Equ. 27 (2002), no. 7-8, 1337-1372.

%\bibitem{Vis02} N. Visciglia, About the Strichartz estimate and the dispersive estimate, C. R. Acad. Bulgare 55 (2002).

%\bibitem{ESV02} P. Exner, P. \v{S}\v{t}ov\'{\i}\v{c}ek and P. Vyt\v{r}as, Generalized boundary conditions for the Aharonov-Bohm effect combined with a homogeneous magnetic field, J. Math. Phys. 43 (2002), 2151. https://doi.org/10.1063/1.1463712

%\bibitem{IJ03}  A. Ionescu and D. Jerison, On the absence of positive eigenvalues of Schr\"odinger operators with rough potentials, Geom. and Funct. Anal. 13 (2003), no. 5, pp. 1029-1081.

%\bibitem{MOR04} M. Melgaard, E. Ouhabaz and G. Rozenblum, Negative discrete spectrum of perturbed multivortex Aharonov-Bohm Hamiltonians, Ann. Henri Poincar\'{e} 5(2004), 979-1012.

%\bibitem{RZ05} L. Robbiano and C. Zuily, Strichartz estimates for Schr\"odinger equations with variable coefficients, M\'{e}m. Soc. Math. Fr. (N.S.), (2001), no. 101-102.


%\bibitem{Ste07} A. Stefanov, Strichartz estimates for the magnetic Schr\"odinger equation, Adv. Math. 210 (2007), 246-303.

%\bibitem{GST07} V. Georgiev, A. Stefanov and M. Tarulli, Smoothing-Strichartz estimates for the Schr\"odinger equation with small magnetic potential, Discret. Contin. Dyn. Syst. A 17 (2007), 771-786.

%\bibitem{DF07} P. D'Ancona and L. Fanelli, Decay estimates for the wave and Dirac equations with a magnetic potential, Commun. Pure Appl. Math. 60 (2007), 357-392.

%\bibitem{Schl07} W. Schlag, Dispersive estimates for Schr\"odinger operators: a survey, in: Mathematical Aspects of Nolinear Dispersive Equations, in: Ann. of Math. Stud. vol. 163, Princeton Univ. Press, Princeton, NJ, 2007, pp. 255-285.

%\bibitem{MMT08} J. Marzuola, J. Metcalfe and D. Tataru, Strichartz estimates and local smoothing estimates for asymptotically flat Schr\"odinger equations, J. Funct. Anal. 255 (2008), 1497-1553.

%\bibitem{DF08} P. D'Ancona and L. Fanelli, Strichartz and smoothing estimates for dispersive equations with magnetic potentials, Commun. Partial Differ. Equ. 33 (2008), 1082-1112.

%\bibitem{EGS08} M. B. Erdogan, M. Goldberg and W. Schlag, Strichartz and smoothing estimates for Schr\"odinger operators with large magnetic potentials in $\R^3$, J. Eur. Math. Soc. 10 (2008), 507-531.

%\bibitem{EGS09} M. B. Erdogan, M. Goldberg and W. Schlag, Strichartz and smoothing estimates for Schr\"odinger operators with almost critical magnetic potentials in three and higher dimensions, Forum Math. 21 (2009), 687-722.

%\bibitem{DFVV10} P. D'Ancona, L. Fanelli, L. Vega and N. Visciglia, Endpoint Strichartz estimates for the magnetic Schr\"odinger equation, J. Funct. Anal. 258 (2010), 3227-3240.

%\bibitem{Kova12} H. Kova\u{r}\'{\i}k, Heat kernels of two-dimensional magnetic Schr\"odinger and Pauli operators, Calc. Var. Partial Differ. Equ. 44(2012), 351-374.

%\bibitem{CF13} F. Cacciafesta and L. Fanelli, Weak dispersive estimates for fractional Aharonov-Bohm-Schr\"odinger groups, Dyn. Partial Differ. Equ. 10(2013), 379-392.

%\bibitem{Bece14} M. Beceanu, A center-stable manifold for the energy-critical wave equation in $\R^3$ in the symmetric setting, J. Hyperbolic Differ. Equ. 11 (3) (2014), 437-476.

%\bibitem{Gree14} W. R. Green, Time Decay Estimates for the Wave Equation with Potential in Dimension Two, J. Differ. Equ., vol. 257, no. 3, Aug. 2014, pp. 868每919.

%\bibitem{BG14} M. Beceanu and M. Goldberg, Strichartz estimates and maximal operators for the wave equation in $\R^3$, J. Funct. Anal. 266 (3) (2014), 1476-1510.

%\bibitem{GK14} G. Grillo and H. Kova\v{r}\'{\i}k, Weighted dispersive estimates for two-dimensional Schr\"odinger operators with Aharonov-Bohm magnetic field, J. Differ. Equ. 256 (2014), 3889-3911.

%\bibitem{FFFP15} L. Fanelli, V. Felli, M. Fontelos and A. Primo, Time Decay of Scaling Invariant Electromagnetic Schr\"odinger Equations on the Plane, Commun. Math. Phys. 337(2015) 1515-1533.

%\bibitem{FGK15} L. Fanelli, G. Grillo and H. Kovarik, Improved time-decay for a class of scaling-critical Schr\"odinger flows, J. Funct. Anal. 269(2015), 3336-3346.

%\bibitem{Hon16} Y. Hong, A spectral multiplier theorem associated with a Schr\"odinger operator, J. Fourier Anal. Appl. 22 (2016), 591, http://dx.doi.org/10.1007/s00041-015-9428-8.

%\bibitem{Stov17} P. \v{S}\v{t}ov\'{\i}\v{c}ek, The heat kernel for two Aharonov-Bohm solenoids in a uniform magnetic field, Annal. Phys. 376(2017), 254-282.

%\bibitem{Fan17} L. Fanelli, Time decay of Schr\"odinger evolutions: the role played by the angular Hamiltonian, Proceedings of the 42nd Sapporo Symposium on Partial Differential Equations : In memory of Professor Taira Shirota, Hokkaido University technical report series in mathematics, 08 (2017) 87-94. http://hdl.handle.net/2115/67180

%\bibitem{JZWWS17} J. Jing, Y. F. Zhang, K. Wang, Z. Wen Long and D. Shi-Hai, On the time-dependent Aharonov每Bohm effect, Phys. Lett. B, 774, no. 10 (2017), 87-90.

%\bibitem{CF17} F. Cacciafesta and L. Fanelli, Dispersive estimates for Dirac equation in an Aharonov-Bohm field, J. Differ. Equ. 263(2017), 4382-4399.

%\bibitem{CaF17} B. Cassano and L. Fanelli, Gaussian decay of harmonic oscillators and related models, J. Math. Anal. Appl. 456 (2017), 214-228.

%\bibitem{EE18} D. E. Edmunds and W. D. Evans, Spectral theory and differential operators, Second Edition, Oxford Univ. Press, 2018.

%\bibitem{Than18} S. Thangavelu, A note on fractional powers of the Hermite operator, arXiv:1801.08343 (2018).

%\bibitem{FFFP18} L. Fanelli, V. Felli, M. Fontelos and A. Primo, Frequency-dependent time decay of Schr\"odinger flows, J. Spectr. Theory. 8(2018), 509-521.

%\bibitem{KMVZZ18} R. Killip, C. Miao, M. Visan, J. Zhang and J. Zheng, Sobolev spaces adapted to the Schr\"odinger operator with inverse-square potential, Math. Z. 288(2018), 1273-1298.

%\bibitem{DMSY18} G. Dell'Antonio, A. Michelangeli, R. Scandone and K. Yajima, The $L^p$-boundedness of wave operators for the three-dimensional multi-centre point interaction. Ann. Inst. H. Poincar$\acute{e}$ 19 (2018), 283-322.

%\bibitem{CMY19} H. D. Cornean, A. Michelangeli and K. Yajima, Two dimensional Schr\"odinger operators with point interactions: Threshold expansions, zero modes and $L^p$-boundedness of wave operators, Rev. Mat. Phys. vol.31, no.4 (2019).

%\bibitem{MZZ20} H. Mizutani, J. Zhang and J. Zheng, Uniform resolvent estimates for Schr\"odinger operator with an inverse-square potential, J. Funct. Anal. 278(2020), 108350.

%\bibitem{GX20} X. Gao and C. Xu, Scattering theory for NLS with inverse-square potential in 2D, J. Math. Anal. Appl. (2020), 123894, https://doi.org/10.1016/j.jmaa.2020.123894.

%\bibitem{Zhu21} L. Zhu, New Bounds for the Modified Bessel Function of the First Kind and Toader-Qi Mean, Mathematics. 2021; 9(22):2867. https://doi.org/10.3390/math9222867

%\bibitem{Yang21} M. Yang, Diffraction of the Aharonov-Bohm Hamiltonian, Ann. Henri poincar\'{e} 22 (2021), 3619-3640.

%\bibitem{Yang22R} M. Yang, Resolvent Estimates for the Magnetic Hamiltonian with Singular Vector Potentials and Applications. Commun. Math. Phys. 394 (2022), 1225每1246. https://doi.org/10.1007/s00220022044275

%\bibitem{Yang22T} M. Yang, The Wave Trace and Resonances of the Magnetic Hamiltonian with Singular Vector Potentials. Commun. Math. Phys. 389 (2022), 1099每1133. https://doi.org/10.1007/s00220021042611

%\bibitem{IO22} R. Imekraz and E. M. Ouhabaz, Bernstein inequalities via the heat semigroup, Math. Ann. 382 (2022), 783每819. https://doi.org/10.1007/s00208021022217

%\bibitem{GYZZ22} X. Gao, Z. Yin, J. Zhang and J. Zheng, Decay and Strichartz estimates in critical electromagnetic fields, J. Funct. Anal. 282 (2022), no. 5, Paper No. 109350, 51 pp.

\end{thebibliography}
\end{document}